\newtheorem{conjeture}{ Conjecture}[section]
\newtheorem{definition}[conjeture]{ Definition}
\newtheorem{notation}[conjeture]{ Notation}
\newtheorem{example}[conjeture]{ Example}
\newtheorem{theorem}[conjeture]{ Theorem}
\newtheorem{lemma}[conjeture]{ Lemma}
\newtheorem{remark}[conjeture]{ Remark}
\newtheorem{corollary}[conjeture]{ Corollary}
\newtheorem{proposition}[conjeture]{ Proposition}
\renewcommand\dim{\text{\rm dim}}
\begin{document}
\setlength{\abovedisplayskip}{3pt}
\setlength{\belowdisplayskip}{3pt}

\title[Certain Homological invariants of bipartite kneser graphs] {Certain Homological invariants of bipartite kneser graphs}

\author[A. Kumar]{Ajay Kumar}
\address{Department of Mathematics, Indian Institute of Technology Jammu, Jammu, J$\&$K, India}
\email{ajay.kumar@iitjammu.ac.in}

\author[P. Singh]{Pavinder Singh}
\address{Department of Mathematics, Central University of Jammu, Rahya-Suchani(Bagla), Samba-181143, J$\&$K, India}
\email{pavinders@gmail.com}

\author[R. Verma]{Rohit Verma}
\address{Department of Mathematics, Central University of Jammu, Rahya-Suchani(Bagla), Samba-181143, J$\&$K, India}
\email{rhtgm@yahoo.in}

\subjclass[2010]{13D02, 13F55, 05C69, 05C99}
\keywords{Edge ideals, bipartite Kneser graphs, graded Betti numbers, regularity, projective dimension}
\date{}


\begin{abstract}
In this paper, we obtain a combinatorial formula for computing the Betti numbers in the linear strand of edge ideals of bipartite Kneser graphs. We deduce lower and upper bounds for regularity of powers of edge ideals of these graphs in terms of associated combinatorial data and show that the lower bound is attained in some cases. Also, we obtain bounds on the projective dimension of edge ideals of these graphs in terms of combinatorial data.
\end{abstract}
\maketitle

\maketitle
\section{Introduction}
Let $R=\Bbbk[x_1,x_2,\ldots,x_n]$ be a standard graded polynomial ring over an infinite  field $\Bbbk$ and $I$ be a homogeneous ideal in $R$.   Then the minimal graded free resolution of the quotient ring $R/I$ is given by
\[
0\rightarrow\bigoplus_jR[-j]^{\beta_{\ell,j}}\rightarrow\cdots\rightarrow\bigoplus_jR[-j]^{\beta_{i,j}}\rightarrow\cdots\rightarrow\bigoplus_jR[-j]^{\beta_{1,j}}\rightarrow R  \rightarrow R/I\rightarrow 0
\] 
where $R[-j]$ is a graded free $R$-module of rank one whose $d$th graded component is $R[-j]_d=R_{d-j}$ the $(d-j)$th graded component of $R$. The non-negative integer $\beta_{i,j}$ is known as the $i$th \textit{graded Betti number} of $R/I$ in degree $j$ which is same as the number of generators of degree $j$ in the $i$th syzygy module of $R/I$.
The \emph{regularity} of  $I$, denoted as $\text{reg}(I)$, is defined as $$\text{reg}(I)=\max\{j-i:\beta_{i,j}(I)\neq 0\}.$$
The \emph{projective dimension} of $I$, denoted as $\text{pd}(I)$, is defined as $$\text{pd}(I)=\max\{i:\beta_{i,j}(I)\neq 0 ~\text {for ~ some}~ j\}.$$
There is a natural correspondence between finite simple graphs on $n$ vertices and quadratic square-free monomial ideals in $R$. For every finite simple graph $G=(V,E)$ (a graph without loops and multiple edges), with vertex set $V$ in bijection with the set of variables $x_1,x_2,\ldots,x_n$ in the polynomial ring $R$, the {\it edge ideal} $I(G)$ of $G$ is the quadratic square-free monomial ideal in $R$ generated by all monomials $x_ix_j$ such that $\{x_i,x_j\}$ is an edge of $G$. For a graph $G,$ we write $\textrm{pd}(G), \beta_{i,j}(G)$ and $\textrm{reg}(G)$ as shorthand for $\textrm{pd}(R/I(G)), \beta_{i,j}(R/I(G))$ and $\textrm{reg}(R/I(G)),$ respectively. The edge ideals were first introduced and studied by Villarreal~\cite{Vill}. The problem of computing and finding bounds on the homological invariants such as Betti numbers, regularity, projective dimension of edge ideals and their powers in terms of the combinatorial invariants of the associated graphs have been studied by a number of researchers (see ~\cite{AB,CH,huneke,dao,hatu,hatu1,hawo,jacq,janase,jase,VK,mogh,pash,paro,wood}). It was shown by Cutkosky, Herzog and Trung \cite{CH}, and independently by Kodiyalam \cite{VK}, that for a homogeneous ideal $I$, $\text{reg}(I^p)$  is asymptotically a linear function for $p>>0$, i.e., there exist integers $a,b$ and $p_0$ such that $$\text{reg}(I^p)=ap+b$$ for all $p \geq p_0$. It is known that the constant $a$ is bounded by the maximum degree of minimal generators of $I$. However, constants $b$ and $p_0$ are not well understood. 
When $I = I(G)$ is an edge ideal, then reg$(I(G)^p)=2p+b$ for all $p\geq p_0$. In this article, we explicitly compute $b$ and $p_0$ for some particular classes of bipartite Kneser graphs.

It is known that for a graph $G$,
$$ \text{ind}(G)+1 \leq \text{reg}\left(I(G)\right) \leq \text{cochord}(G)+1,$$ where $\text{ind}(G)$ is the induced matching number of $G$ and $\text{cochord}(G)$ is the co-chordal number of $G$ (see \cite{katz,wood}). Beyarslan, H\`{a} and Trung~\cite{behatr} shown that for a finite simple graph $G$ on $n$ vertices containing a Hamiltonian path, the regularity of edge ideal $I(G)$ satisfies the following inequality 
 \begin{equation}\label{EQ}
 \text{reg}(I(G))\le 
 \begin{array}{ll}
 \left\lfloor\frac{n+1}{3}\right\rfloor+1. 
 \end{array}
 \end{equation}
 
 In this article, we study various homological invariants such as Betti numbers, regularity, projective dimension of edge ideals of bipartite Kneser graphs. Given two positive integers $m$ and $k$ with $m \geq 2k,$ the bipartite Kneser graph $\mathcal{H}(m,k)$ is the graph whose vertices are  $k$-subsets and $(m-k)$-subsets of $[m]=\{1,2,\ldots,m\}$ such that any two vertices are connected by an edge if and only if one is a subset of the other. Determining whether a graph G has a hamiltonian cycle is an NP-Complete problem  and it has long been conjectured that for any $k \geq 1$ and $m \geq 2k + 1,$ all
 bipartite Kneser graphs  have a Hamiltonian cycle. The motivation for this conjecture is a more general conjecture
 due to Lovász \cite{LL}, which states that, apart from five exceptional graphs, every connected vertex-transitive graph has a Hamiltonian cycle. A
 graph is called a vertex-transitive graph if it ‘looks the same’ from the point of view of any vertex. M{\"u}tze and Su in ~\cite{musu} settled the conjecture of Lovász for bipartite Kneser graphs, that is, for any $k \geq 1$ and $m \geq 2k + 1,$ all
 bipartite Kneser graphs  have a Hamiltonian cycle. Since every Hamiltonian cycle contains a  Hamiltonian path, equation~(\ref{EQ}) gives an upper bound on the regularity of edge ideals of bipartite Kneser graphs.
 
 Beyarslan, H\`{a} and Trung~\cite{behatr} proved that for every finite simple graph $G$ and every integer $p\geq 1$, the following inequality 
 \begin{equation}\label{eqn1}
 \text{reg}(R/{I(G)}^p)\geq 2(p-1)+\text{ind}(G)
 \end{equation}
 holds. Also, they have shown that the equality holds, if $G$ is a forest or a cycle. Jayanthan, Narayanan and Selvaraja in~\cite[Theorem 3.6]{janase} gave an upper bound for powers of edge ideals of bipartite graphs as follows
  \begin{equation}\label{eqn}
 {\rm reg}(R/{I(G)}^p)\leq 2(p-1)+{\rm cochord}(G)
\end{equation} for all $p \geq 1$. 

In \cite{behatr}, the authors raised the question, for which graphs $\textrm{reg}(I(G))^p=2p+\text{ind}(G)-1$ for $p>>0$. It is known that for certain classes of  graphs $\textrm{reg}(I(G))^p=2p+\text{ind}(G)-1$.

\begin{theorem}
	Let $G$ be a simple graph and $\text{ind}(G)$ be the induced matching number of $G.$
	Then for all $p \geq 1,$ we have
	$reg (I(G)^p) = 2p + \text{ind}(G) - 1$ in the following cases:
	\begin{enumerate}[(a)]
		\item $G$ is a unmixed bipartite graph (see \cite{janase});
		\item $G$ is a weakly chordal bipartite graph (see \cite{janase});
		\item $G$ is a $P_6$ free bipartite graph (see \cite{janase});
		\item $G$ is a whiskered bipartite graph (see \cite{janase});
		\item $G$ is a very well-covered graph (see \cite{js}).
		\item $G$ is a forest (see \cite{behatr}).
	\end{enumerate}
Also, authors in \cite{behatr} prove that for all $p \geq 2,$ and $G$ a $n$-cycle, 
$reg( I(G)^p )= 2p + \text{ind}(G) - 1$.
\end{theorem}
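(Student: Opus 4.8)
The plan is to derive each equality by pairing the universal lower bound coming from \eqref{eqn1} with a class-specific upper bound; I record the mechanism common to the bipartite cases. Since $\reg(I(G)^p)=\reg(R/I(G)^p)+1$, inequality \eqref{eqn1} gives, for every simple graph $G$ and every $p\ge 1$,
\[
\reg(I(G)^p)\ \ge\ 2(p-1)+\text{ind}(G)+1\ =\ 2p+\text{ind}(G)-1 .
\]
Hence in every case it remains only to establish the reverse inequality $\reg(I(G)^p)\le 2p+\text{ind}(G)-1$. For bipartite $G$ the bound \eqref{eqn} already yields $\reg(I(G)^p)\le 2p+\text{cochord}(G)-1$, which settles the equality whenever $\text{cochord}(G)=\text{ind}(G)$; since this is not available uniformly across the families listed, I describe the finer inductive argument that works in general.

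First I would dispose of the base case $p=1$. For each of the four bipartite families (a)--(d) one has $\reg(I(G))=\text{ind}(G)+1$, which is precisely the $p=1$ instance; this is verified family by family in \cite{janase}, while the very well-covered case is \cite{js} and the forest case is \cite{behatr}. Granting this, the desired equality becomes $\reg(I(G)^p)=2p+\reg(I(G))-2$, so the statement reduces to showing that for these graphs the regularity of powers increases by exactly $2$ at each step beyond the first.

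The engine for the upper bound is Banerjee's inductive inequality: for $I=I(G)$ and $s\ge 1$,
\[
\reg\bigl(I^{s+1}\bigr)\ \le\ \max\bigl\{\,\reg\bigl(I^{s+1}:M\bigr)+2s,\ \reg\bigl(I^{s}\bigr)\,\bigr\},
\]
the maximum being over the minimal monomial generators $M$ of $I^{s}$ (each of degree $2s$). The structural input, also due to Banerjee, is that $I^{s+1}:M$ is the edge ideal of the \emph{even-connection graph} $G^{M}$, and that when $G$ is bipartite $G^{M}$ is again bipartite on the same vertex partition, because even-connection proceeds along odd-length paths and so only joins vertices on opposite sides. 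Substituting this and using the induction hypothesis $\reg(I^{s})=2s+\reg(I(G))-2$, the step closes as soon as one proves the key lemma
\[
\reg\bigl(I(G^{M})\bigr)\ \le\ \reg\bigl(I(G)\bigr)\qquad\text{for every such }M,
\]
since then $\reg(I^{s+1})\le\max\{\reg(I(G))+2s,\ 2s+\reg(I(G))-2\}=2s+\reg(I(G))=2(s+1)+\reg(I(G))-2$.

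The hard part is exactly this last lemma. Although $G^{M}$ remains bipartite, it is $G$ together with the even-connection edges and need not lie in the original family: weak chordality, $P_6$-freeness, and the whiskered or unmixed structure can all be destroyed by the new edges. One therefore cannot simply reapply the base case to $G^{M}$; instead one must bound $\reg(I(G^{M}))$ directly, typically by showing that the added edges keep the induced matching number from exceeding $\text{ind}(G)$ and that the bound $\reg(I(H))\le\text{ind}(H)+1$ survives passage to $G^{M}$. This combinatorial control of colon-ideal graphs is carried out in \cite{janase} for the bipartite families and in \cite{js} for the very well-covered case. For the forest and cycle cases \cite{behatr} avoids the even-connection bookkeeping altogether, arguing instead from the additivity of regularity over the components of a forest and from the explicitly known resolution of a cycle (the cycle equality being restricted to $p\ge 2$ because $\reg(I(G))=\text{ind}(G)+1$ can already fail at $p=1$ for odd cycles).
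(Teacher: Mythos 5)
This theorem appears in the paper purely as a compilation of known results, with no proof beyond the citations to \cite{janase}, \cite{js} and \cite{behatr}, and your sketch correctly reconstructs the strategy those references actually use: the lower bound $2p+\text{ind}(G)-1$ from inequality~\eqref{eqn1}, the base case $\reg(I(G))=\text{ind}(G)+1$ for each family, and Banerjee's induction $\reg(I^{s+1})\le\max\{\reg(I^{s+1}:M)+2s,\ \reg(I^s)\}$ through the even-connection graphs $G^M$ (which, as you note, stay bipartite on the same partition since even-connections run along odd walks), with the genuinely hard content correctly identified as the lemma $\reg(I(G^M))\le\reg(I(G))$ that \cite{janase} and \cite{js} establish for their respective classes, so your reduction arithmetic and the caveat that $G^M$ may leave the original family are both accurate. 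One factual slip worth fixing: the cycles for which the equality fails at $p=1$ are those with $n\equiv 2\pmod 3$ (where $\reg(I(C_n))=\lfloor n/3\rfloor+2=\text{ind}(C_n)+2$), not the odd cycles --- $C_8$ is even and fails while $C_7$ and $C_9$ satisfy the $p=1$ equality --- and this congruence condition is the actual reason \cite{behatr} restricts the cycle case to $p\ge 2$.
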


It is easy to see that for all $k \geq 1$, $G=\mathcal{H}(2k+1,k)$ does not belongs to any of classes mentined above. We prove that for $p \geq 1,$ and $G=\mathcal{H}(2k+1,k)$ we have
$reg (I(G)^p) = 2p + \text{ind}(G) - 1.$
Using equations~\ref{eqn1} and ~\ref{eqn},  we deduce lower and upper bounds for regularity of powers of edge ideals of bipartite Kneser graphs in terms of associated combinatorial data, and show that the lower bound is attained in some cases.
\begin{theorem}{\rm (Theorem \ref{regb})}
	For a bipartite Kneser graph $\mathcal{H}(m,k)$, we have
	\[
	2(p-1)+{2k\choose k}\leq {\rm reg}(R/{I(\mathcal{H}(m,k))}^p)\leq 2(p-1)+{m\choose k}.
	\] 
	Furthermore, the lower bound is attained if $m=2k$  or $2k+1$.                                                                        
\end{theorem}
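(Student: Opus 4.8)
The plan is to sandwich $\text{reg}(R/I(\mathcal{H}(m,k))^p)$ between the two bounds already available. Since $\mathcal{H}(m,k)$ is bipartite, inequality~(\ref{eqn}) applies and gives $\text{reg}(R/I(G)^p) \le 2(p-1) + \text{cochord}(G)$, while~(\ref{eqn1}) gives $\text{reg}(R/I(G)^p) \ge 2(p-1) + \text{ind}(G)$. Thus the displayed inequalities reduce to two purely combinatorial facts: that $\text{ind}(\mathcal{H}(m,k)) = \binom{2k}{k}$ and that $\text{cochord}(\mathcal{H}(m,k)) \le \binom{m}{k}$.

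For the induced matching number I would first exhibit an induced matching of size $\binom{2k}{k}$. Fix a $2k$-subset $S \subseteq [m]$, say $S = \{1,\ldots,2k\}$, and for each $k$-subset $A \subseteq S$ put $B_A = A \cup ([m]\setminus S)$, an $(m-k)$-subset containing $A$; this yields an edge $(A,B_A)$. Because $A \subseteq S$ we have $A \cap ([m]\setminus S) = \emptyset$, so $A \subseteq B_{A'} \iff A \subseteq A'$, which for distinct $k$-subsets $A \neq A'$ never occurs; hence these $\binom{2k}{k}$ edges form an induced matching and $\text{ind}(\mathcal{H}(m,k)) \ge \binom{2k}{k}$. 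The reverse inequality is the crux, and I would derive it from the Bollob\'as set-pair inequality. Given any induced matching with edges $(A_i,B_i)$, $i=1,\dots,t$, set $C_i = [m]\setminus B_i$, a $k$-subset. Then $A_i \subset B_i$ becomes $A_i \cap C_i = \emptyset$, and the induced-matching condition (no edge joins $A_i$ to $B_j$ for $i\neq j$, i.e.\ $A_i \not\subseteq B_j$) becomes exactly $A_i \cap C_j \neq \emptyset$ for all $i\neq j$. The pairs $(A_i,C_i)$ therefore satisfy the set-pair hypotheses, yielding $\sum_{i=1}^t \binom{|A_i|+|C_i|}{|A_i|}^{-1} \le 1$; since $|A_i|=|C_i|=k$ this reads $t/\binom{2k}{k} \le 1$, so $t \le \binom{2k}{k}$.

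For the cochordal number I would cover the edges of $\mathcal{H}(m,k)$ by the $\binom{m}{k}$ stars centered at the $k$-subsets: every edge $(A,B)$ contains a unique $k$-subset endpoint $A$, hence lies in the star at $A$, and each star is co-chordal since the complement of a star is a clique together with isolated vertices, which is chordal. This gives $\text{cochord}(\mathcal{H}(m,k)) \le \binom{m}{k}$ and completes both displayed bounds. For attainment: when $m=2k$ the graph degenerates into $\binom{2k}{k}$ disjoint edges, so the lower bound $\text{ind} = \binom{2k}{k}$ equals $\binom{m}{k}$, and combining with $\text{ind} \le \text{cochord} \le \binom{m}{k}$ (the first inequality being the general $\text{ind}(G)\le\text{cochord}(G)$) pins both bounds to $2(p-1)+\binom{2k}{k}$; when $m=2k+1$ the equality is precisely the paper's separate result $\text{reg}(I(\mathcal{H}(2k+1,k))^p) = 2p + \text{ind}(\mathcal{H}(2k+1,k)) - 1$ together with $\text{ind} = \binom{2k}{k}$.

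The only genuinely delicate step is the upper bound on the induced matching number; everything else (the explicit matching, the star cover, the squeeze for $m=2k$) is routine. The fact that $\text{ind}(\mathcal{H}(m,k))$ turns out to be $\binom{2k}{k}$ independently of $m$ is exactly the Bollob\'as bound, and recognizing the induced-matching constraint as a cross-intersecting set-pair configuration via the complements $C_i = [m]\setminus B_i$ is the key move of the whole argument.
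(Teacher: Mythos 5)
Most of your argument coincides with the paper's own proof: your explicit induced matching (fix a $2k$-set $S$ and match each $k$-subset $A\subseteq S$ with $A\cup([m]\setminus S)$) is exactly the paper's family $E_S$ from Remark~\ref{3dst}, your star cover giving ${\rm cochord}(\mathcal{H}(m,k))\le\binom{m}{k}$ is the paper's equation~(\ref{eqn3}), and the squeeze for $m=2k$ is identical. Your Bollob\'as set-pair argument, via $C_i=[m]\setminus B_i$, that ${\rm ind}(\mathcal{H}(m,k))=\binom{2k}{k}$ \emph{exactly} is correct and in fact stronger than anything the paper proves (the paper only shows $E_S$ is a \emph{maximal} pairwise $3$-disjoint family, which gives ${\rm ind}\ge\binom{2k}{k}$; maximal does not mean maximum), but it is also unnecessary: only the lower bound on ${\rm ind}$ enters the theorem through inequality~(\ref{eqn1}).

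The genuine gap is your treatment of the $m=2k+1$ attainment. The statement you invoke, ${\rm reg}(I(\mathcal{H}(2k+1,k))^p)=2p+{\rm ind}-1$, is not an independently available result: the introduction announces it precisely as a consequence of Theorem~\ref{regb}, and the paper explicitly notes that $\mathcal{H}(2k+1,k)$ belongs to none of the classes (unmixed bipartite, weakly chordal, $P_6$-free bipartite, whiskered bipartite, very well-covered, forest, cycle) for which such an equality is known. So your citation is circular. Knowing ${\rm ind}=\binom{2k}{k}$ only reproduces the lower bound; to attain it you must improve the co-chordal cover from size $\binom{2k+1}{k}$ down to $\binom{2k}{k}$, which is the actual content of the paper's proof. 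Concretely, with $m-2k=1$ the paper writes $E_S=\left\{\{A_i,B_i\}\right\}_{i=1}^{\binom{2k}{k}}$ where $B_i=A_i\sqcup\{t\}$ for a fixed $t\in[m]$, forms the double stars $\mathcal{S}_{A_iB_i}=\mathcal{S}_{A_i}\cup\mathcal{S}_{B_i}$, checks each is co-chordal, and verifies every edge $\{A,B\}$ of $\mathcal{H}(2k+1,k)$ lies in some $\mathcal{S}_{A_iB_i}$ (if $t\notin B$ then $A\subset[m]\setminus\{t\}$ forces $A=A_s$, so the edge lies in the star at $A_s$; if $t\in B$ the edge lies in a star at some $A_r$ or $B_r$). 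This yields ${\rm cochord}(\mathcal{H}(2k+1,k))\le\binom{2k}{k}$, and then inequality~(\ref{eqn}) closes the squeeze for every power $p$. Without this cover, or some other independent proof of the matching upper bound on regularity, your $m=2k+1$ case is unproven.
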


 In Section $3$, we deduce a combinatorial formula for computing the graded Betti numbers in the linear strand of edge rings of bipartite Kneser graphs using Hochster's formula given as follows. 

\begin{theorem}{\rm (Theorem \ref{mainb})} 
	For the bipartite Kneser graph $\mathcal{H}(m,k)$, we have 
	\[
	\beta_{i,i+1}(\mathcal{H}(m,k))=\sum_{r+s=i+1}\sum_{t=k}^{m-k}\binom{{t\choose k}}{r}N_{m,s,m-k}^{=t},
	\]
	where $N_{m,s,m-k}^{=t}$ is defined as in the Theorem \ref{mainb}.
\end{theorem}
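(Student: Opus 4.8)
The plan is to run Hochster's formula on the Stanley--Reisner (independence) complex of $\mathcal{H}(m,k)$ and keep only the part that feeds the linear strand. Let $\Delta=\mathrm{Ind}(\mathcal{H}(m,k))$ be the complex whose faces are the independent sets of the graph, and let $\Delta_W$ denote the subcomplex induced on a vertex subset $W$. Since $j-i-1=0$ when $j=i+1$, Hochster's formula reads
\[
\beta_{i,i+1}(\mathcal{H}(m,k))=\sum_{\substack{W\subseteq V\\ |W|=i+1}}\dim_{\Bbbk}\tilde H_0(\Delta_W;\Bbbk).
\]
The first step is to make the right-hand side combinatorial: the $1$-skeleton of $\Delta_W=\mathrm{Ind}(\mathcal{H}(m,k)[W])$ is exactly the complement graph $\overline{\mathcal{H}(m,k)}[W]$, so $\dim_{\Bbbk}\tilde H_0(\Delta_W;\Bbbk)$ equals the reduced number of connected components of $\overline{\mathcal{H}(m,k)}[W]$. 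Thus $\beta_{i,i+1}$ counts, with multiplicity, the vertex subsets $W$ of size $i+1$ whose induced complement is disconnected.

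Next I would use the bipartite structure. Write the vertex set as $V=X\sqcup Y$, where $X$ is the family of $k$-subsets and $Y$ the family of $(m-k)$-subsets of $[m]$, and split $W=\mathcal{A}\sqcup\mathcal{B}$ with $\mathcal{A}\subseteq X$ and $\mathcal{B}\subseteq Y$. As $\mathcal{H}(m,k)$ is bipartite, in $\overline{\mathcal{H}(m,k)}[W]$ the family $\mathcal{A}$ spans a clique, $\mathcal{B}$ spans a clique, and a cross edge joins $A\in\mathcal{A}$ to $B\in\mathcal{B}$ precisely when $A\not\subseteq B$. Because each clique is connected, $\overline{\mathcal{H}(m,k)}[W]$ has at most two components, so $\dim_{\Bbbk}\tilde H_0(\Delta_W;\Bbbk)\in\{0,1\}$, and it equals $1$ exactly when $\mathcal{A},\mathcal{B}\neq\emptyset$ and there is no cross edge, i.e. when $A\subseteq B$ for all $A\in\mathcal{A}$ and $B\in\mathcal{B}$, equivalently $\bigcup_{A\in\mathcal{A}}A\subseteq\bigcap_{B\in\mathcal{B}}B$. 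This dichotomy is the crux of the argument and the place I expect the only real subtlety: one must verify both that no more than two components can occur (forcing $\tilde H_0$ to be $0$ or $1$) and that disconnection is equivalent to the nestedness condition above.

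Finally I would enumerate. By the above, $\beta_{i,i+1}$ is the number of pairs $(\mathcal{A},\mathcal{B})$ of nonempty families with $|\mathcal{A}|+|\mathcal{B}|=i+1$ and $\bigcup\mathcal{A}\subseteq\bigcap\mathcal{B}$. Put $r=|\mathcal{A}|\ge 1$ and $s=|\mathcal{B}|\ge 1$, so $r+s=i+1$, and stratify by $t=\bigl|\bigcap_{B\in\mathcal{B}}B\bigr|$. Since $\bigcup\mathcal{A}$ is nonempty and must lie inside a $t$-set while each $B$ has size $m-k$, the size $t$ ranges over $k\le t\le m-k$. For a fixed family $\mathcal{B}$ with $\bigl|\bigcap\mathcal{B}\bigr|=t$, a valid $\mathcal{A}$ is a choice of $r$ distinct $k$-subsets of the $t$-element set $\bigcap\mathcal{B}$, giving $\binom{\binom{t}{k}}{r}$ possibilities, and the number of families $\mathcal{B}$ of $s$ distinct $(m-k)$-subsets whose common intersection has size exactly $t$ is $N_{m,s,m-k}^{=t}$. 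Summing over $t$ and over the compositions $r+s=i+1$ gives
\[
\beta_{i,i+1}(\mathcal{H}(m,k))=\sum_{r+s=i+1}\sum_{t=k}^{m-k}\binom{\binom{t}{k}}{r}N_{m,s,m-k}^{=t},
\]
where the outer sum is understood with $r,s\ge 1$: the term $s=0$ forces $\bigcap\mathcal{B}=[m]$ of size $m>m-k$, outside the range of $t$, while the term $r=0$ corresponds to $\mathcal{A}=\emptyset$ and contributes a connected induced complement, hence nothing to $\tilde H_0$. What remains is the routine matching of $N_{m,s,m-k}^{=t}$ to its definition in the statement together with these boundary conventions.
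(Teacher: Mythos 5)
Your route is the paper's route: Hochster's formula in homological degree shift one, the observation that $W$ contained in a single side of the bipartition gives a simplex with vanishing reduced homology, the characterization that for $W=\mathcal{A}\sqcup\mathcal{B}$ the induced complex is disconnected exactly when $A\subseteq\bigcap_{B\in\mathcal{B}}B$ for all $A\in\mathcal{A}$, and then stratification by $t=\bigl|\bigcap_{B\in\mathcal{B}}B\bigr|$ with $k\le t\le m-k$ and the count $\binom{\binom{t}{k}}{r}$ for the $\mathcal{A}$-side. Your complement-graph formulation (each side spans a clique in $\overline{\mathcal{H}(m,k)}[W]$, hence at most two components, hence $\dim\tilde H_0\in\{0,1\}$) is actually spelled out more carefully than in the paper, which asserts the two-component criterion without the clique argument; this is a small improvement, not a different method.

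One bookkeeping point needs fixing, and it is the only substantive discrepancy. The paper's notation defines $N_{m,s,m-k}^{=T}$ for a \emph{fixed} set $T$, and sets $N_{m,s,m-k}^{=t}=N_{m,s,m-k}^{=T}$ for $|T|=t$ since the quantity depends only on the cardinality. Consequently the number of families $\mathcal{B}$ of $s$ distinct $(m-k)$-subsets with $\bigl|\bigcap_{B\in\mathcal{B}}B\bigr|=t$ is $\binom{m}{t}N_{m,s,m-k}^{=t}$, not $N_{m,s,m-k}^{=t}$, and Theorem \ref{mainb} as stated and proved in Section 3 carries this factor: $\beta_{i,i+1}(\mathcal{H}(m,k))=\sum_{r+s=i+1}\sum_{t=k}^{m-k}\binom{\binom{t}{k}}{r}\binom{m}{t}N_{m,s,m-k}^{=t}$. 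The version you were given (the introduction's restatement) omits $\binom{m}{t}$ — evidently a typo in the paper — and your proof silently reinterprets $N_{m,s,m-k}^{=t}$ as the aggregate count over all possible intersection sets, which makes your formula literally match the quoted statement but conflict with the paper's own definition of the symbol. Under the paper's notation your enumeration step ``the number of families $\mathcal{B}$ \ldots\ is $N_{m,s,m-k}^{=t}$'' is off by the factor $\binom{m}{t}$; insert it there and your argument reproduces the Section 3 theorem exactly. Everything else, including the boundary conventions $r,s\ge 1$, is correct and aligned with the paper.
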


 Authors in  \cite{huneke, dao} studied the bounds on the projective dimension of graphs using domination parameters. We obtain bounds on the projective dimension of edge ideals of bipartite Kneser graphs. 
\begin{theorem}{\rm (Theorem \ref{pdmain})} 
	For the bipartite Kneser graph $\mathcal{H}(m,k)$, we have
	\[  
	2{m \choose k}-{2k \choose k} \leq {\rm pd}(\mathcal{H}(m,k))\leq 2{m \choose k}-\max \left\{ k+1,\frac{{m \choose k}}{{m-k \choose k}}\right\}.
	\]
\end{theorem}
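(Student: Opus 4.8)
The plan is to pass to depth and control it from both sides. Write $n=2\binom mk$ for the number of vertices of $\mathcal H(m,k)$ (there are $\binom mk$ vertices on each side), and recall the Auslander--Buchsbaum formula ${\rm pd}(\mathcal H(m,k))=n-\operatorname{depth}\big(R/I(\mathcal H(m,k))\big)$. Thus the asserted two-sided estimate is equivalent to
\[
\max\Big\{k+1,\ \tfrac{\binom mk}{\binom{m-k}k}\Big\}\ \le\ \operatorname{depth}\big(R/I(\mathcal H(m,k))\big)\ \le\ \binom{2k}{k}.
\]
I would get the lower bound on ${\rm pd}$ (the right-hand depth inequality) from the standard inequality ${\rm pd}(R/I)\ge{\rm bight}(I)$, and the upper bound on ${\rm pd}$ (the left-hand depth inequality) from the domination-type upper estimates of \cite{dao,huneke}.

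For the lower bound, since $I(\mathcal H(m,k))$ is squarefree, $R/I$ is reduced, so its associated primes are exactly the minimal vertex-cover primes; hence ${\rm bight}$ equals the largest size of a minimal vertex cover, equivalently $n$ minus the smallest size of a maximal independent set. The key step is therefore to exhibit a maximal independent set of size $\binom{2k}k$. I would take
\[
S=\Big\{A\subseteq[2k-1]:\ |A|=k\Big\}\ \cup\ \Big\{[m]\setminus C:\ C\subseteq[2k-1],\ |C|=k\Big\},
\]
the first block consisting of $k$-subsets (left vertices), the second of $(m-k)$-subsets (right vertices), so $|S|=2\binom{2k-1}k=\binom{2k}k$. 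Independence is immediate: if $A,C\subseteq[2k-1]$ have size $k$ then $|A|+|C|=2k>2k-1$ forces $A\cap C\neq\varnothing$, i.e.\ $A\not\subseteq[m]\setminus C$. For maximality I would verify domination: a $k$-subset $X\not\subseteq[2k-1]$ has $|X\cap[2k-1]|\le k-1$, so $[2k-1]\setminus X$ contains a $k$-set $C$, whence $X\subseteq[m]\setminus C\in S$; the symmetric computation covers the remaining $(m-k)$-subsets. This gives a maximal independent set of size $\binom{2k}k$, hence ${\rm pd}(\mathcal H(m,k))\ge{\rm bight}\ge n-\binom{2k}k=2\binom mk-\binom{2k}k$.

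For the upper bound I would invoke the domination-parameter estimates of \cite{dao,huneke}, which bound ${\rm pd}(R/I(G))$ above by $n$ minus an appropriate (edgewise) domination invariant of $G$; the problem then becomes purely combinatorial, namely to bound that invariant below by $\max\{k+1,\binom mk/\binom{m-k}k\}$. The quantity $\binom mk/\binom{m-k}k$ is forced by a covering count: $\mathcal H(m,k)$ is regular of degree $d=\binom{m-k}k$ (each $(m-k)$-set contains exactly $\binom{m-k}k$ of the $k$-sets, and dually each $k$-set lies in $\binom{m-k}k$ of the $(m-k)$-sets), so any family dominating all $\binom mk$ vertices of one side must have on the order of $\binom mk/\binom{m-k}k$ members. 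The term $k+1$ does not come from this count (for $m\gg k$ the ratio tends to $1$), and I would instead derive it from a separate structural estimate of the domination invariant reflecting the rich containment structure of $\mathcal H(m,k)$; combining the two yields $\operatorname{depth}\ge\max\{k+1,\binom mk/\binom{m-k}k\}$.

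The main obstacle is precisely this upper bound on ${\rm pd}$: identifying the exact domination invariant of \cite{dao,huneke} that produces the clean value $\binom mk/\binom{m-k}k$ rather than the weaker $\binom mk/(\binom{m-k}k+1)$ coming from the naive closed-neighbourhood count, and separately locating the structural source of the $k+1$ term. The lower bound, by contrast, is essentially complete once the explicit maximal independent set $S$ above is verified.
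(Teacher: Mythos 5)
Your lower bound is complete and essentially identical to the paper's. Your maximal independent set $S$ is precisely the paper's set $\mathcal{W}$ of Notation~\ref{dom} (take $T=\{2k,\dots,m\}$: your first block is $\mathcal{A}'$, the $k$-sets disjoint from $T$, and your complements $[m]\setminus C$ are exactly the $(m-k)$-sets containing $T$, i.e.\ $\mathcal{B}'$), and your route via ${\rm pd}(R/I)\ge{\rm bight}(I)=n-i(G)$ is numerically the same as the paper's invocation of \cite[Proposition 4.7]{dao}, ${\rm pd}(G)\ge |V(G)|-i(G)$, applied with $i(\mathcal{H}(m,k))\le|\mathcal{W}|=\binom{2k}{k}$. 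Your independence and domination checks are correct (and, as a side remark, your construction even covers $m=2k$, which the paper treats separately).

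The upper bound, however, is a genuine gap, and you acknowledge it yourself: you never identify the invariants or supply the constructions, and both of your open ``obstacles'' are exactly where the paper's content lies. First, the term $\binom{m}{k}/\binom{m-k}{k}$ in the paper does \emph{not} come from a domination count at all: it is the max-degree bound of \cite[Corollary 5.4]{huneke}, ${\rm pd}(G)\le n\bigl(1-\frac{1}{2d}\bigr)$, applied with $n=2\binom{m}{k}$ and $d=\binom{m-k}{k}$ since $\mathcal{H}(m,k)$ is $\binom{m-k}{k}$-regular; this immediately gives ${\rm pd}\le 2\binom{m}{k}-\binom{m}{k}/\binom{m-k}{k}$. (Your worry about losing to the ``$+1$'' in a closed-neighbourhood count would in fact also dissolve along your own lines: applying the independence domination number $\tau$ to the independent set $C=V_1$, every dominator lies in $V_2$ and open-dominates at most $d$ vertices of $V_1$, so $\gamma(V_1,G)\ge\binom{m}{k}/\binom{m-k}{k}$; but you did not carry this out.) Second, and more seriously, the $k+1$ term requires a specific construction you do not have: the paper uses \cite[Theorem 4.4]{dao}, ${\rm pd}(G)\le n-\tau(G)$, together with Lemma~\ref{upb}, which fixes a $(k-1)$-set $Q$, takes the independent family $\mathcal{D}$ of all $(m-k)$-sets containing $Q$, and proves $\gamma(\mathcal{D},\mathcal{H}(m,k))=k+1$ exactly --- the upper estimate via the explicit dominating family $\mathcal{E}=\{Q\sqcup\{i_t\}:1\le t\le k+1\}$ with $S=\{i_1,\dots,i_{k+1}\}$ disjoint from $Q$, and the lower estimate by a pigeonhole argument producing, for any putative family of $r\le k$ dominators, a member of $\mathcal{D}$ avoiding all of them. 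Since your proposal explicitly defers both steps, it proves only the left-hand inequality of the theorem.
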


\section{Preliminaries}

{\small \subsection{Graphs and Independence Complex}
Consider a finite simple graph $G=(V(G),E(G))$} (or, simply $G=(V,E)$). Then we have the following:
 
A subgraph $H$ of graph $G$ with vertex set $V(H)=\{x_1,x_2,\ldots,x_n\}(n\geq 2)$ and edge set $E(H)=\{\{x_{i},x_{i+1}\};1\le i\le n-1\}$ is called a {\it path} of length $n$, denoted by $P_n$. If a path $P_n$ in a graph contains each vertex of $G$, then such a path $P_n$ is called a {\it Hamiltonian path}. 

 If the initial vertex $x_1$ and terminal vertex $x_n$ of path $P_n$ are also connected by an edge in $G$, then we call the subgraph $H^\prime$ with  $V(H^\prime)=\{x_1,x_2,\ldots,x_n\}(n\geq 3)$ and edge set $E(H^\prime)=\{\{x_{1},x_{2}\},\ldots,\{x_{n-1},x_{n}\},\{x_{n},x_{1}\} \}$ is called a {\it cycle} of length $n$ or $n$-cycle, denoted by $C_n$. If a cycle $C_n$ in a graph $G$ contains each vertex of $G$, then such a cycle is called a {\it Hamiltonian cycle}.

A subset $S\subseteq V(G)$ is called an {\it independent} (or {\it stable}) set if no two vertices in $S$ are connected by an edge in $G$. A graph $G$ is {\it bipartite} if its vertex set $V$ can be partitioned into two disjoint independent subsets $V_1$ and $V_2$ such that every edge of $E$ is the form $e=\{u,v\}$ with $u\in V_1$ and $v\in V_2$.

We say a graph $G$ is said to be {\it chordal} if it does not contain any induced cycle of length greater than 3 and a graph $G$ is {\it co-chordal} if its complement graph $G^c$ is chordal, where $G^c$ is a simple graph whose vertex set is same as that of vertex set of $G$ and edge set $E(G^c)=\{\{u,v\}:u,v\in V(G)~\&~\{u,v\}\notin E(G)\}$.

Two distinct edges $e=\{x_i,x_j\}$ and $e^\prime=\{y_s,y_t\}$ in $G$ are said to be {\it 3-disjoint} if $\{x_i,x_j\}\cap \{y_s,y_t\}=\emptyset$ and the induced subgraph $G[W]$ is the disjoint union of edges $e$ and $e^\prime$ where $W=\{x_i,x_j,y_s,y_t\}$. A subset $T=\{e_1,e_2,\ldots,e_t\}\subset E(G)$ is said to be {\it pairwise 3-disjoint} subset if any two distinct edges in $T$ are 3-disjoint. 

A {\it matching} $M$ in a graph $G$ is a subset of $E(G)$ such that no two edges in $M$ have a common vertex. If a matching $M$ of $G$ is pairwise $3$-disjoint, then such a matching $M$ is said to be an {\it induced matching}. The maximum of size of induced matchings in a graph $G$ is known as the {\it induced matching number} of $G$, denoted by $\text{ind}(G)$. Clearly, the induced matching number of $G$ is the largest size of pairwise 3-disjoint subset of $E(G)$. 

Let $u\in V(G)$. If $\{v_1,v_2, \ldots,v_n\}$ is the set of all vertices in $G$ such that $u$ is adjacent to $v_i$; for $1\le i\le n$, then such a set is called an {\it open neighbourhood} of $u$, denoted by $N_G(u)$, and the set 
\[
\mathcal{N}_G[u]=\{u,v_1,v_2, \ldots, v_n\}\subseteq V(G).
\]
is known as {\it closed neighbourhood} of $u$ in $G$, denoted by $\mathcal{N}_G[u]$. In such a case, the subgraph $\mathcal{S}_u$ of $G$, with  vertex set $V(\mathcal{S}_u)=\mathcal{N}_{G}[u]$ and edge set $E(\mathcal{S}_u)=\{\{u,v_i\}\mid 1\leq i\leq n\}$, is called the {\it star} with center $u$.

A collection $\{H_1,H_2,\ldots,H_s\}$ of subgraphs of $G$ is called a {\it covering} of $G$ if
\[
\bigsqcup_{j=1}^s E(H_j)=E(G).
\] 
The smallest size of the above collection of subgraphs is called {\it cover number}. If each subgraph $H_j$ in the  collection $\{H_1,H_2,\ldots,H_s\}$ of subgraphs of $G$ is co-chordal, then the corresponding cover number is called {\it co-chordal cover number}. It is denoted by ${\rm cochord}(G)$

\begin{example}\rm
Let $G$ and $G^{\prime}$ be the two bipartite graphs given in the Figure~\ref{g} and Figure~\ref{gg}. Clearly from the Figure~\ref{g}, ${\rm cochord}(G)=3$ and from the Figure~\ref{gg}, ${\rm cochord}(G^{\prime})=2$.   
\begin{figure}[ht]
\centering
\begin{minipage}[b]{0.35\linewidth}
\centering
\begin{tikzpicture}[scale=4]
\tikzstyle{every node}=[circle, fill=black!,inner sep=0pt, minimum
width=4pt]

 \node (n_1) at (-.2,.2)[label=left:{$x_1$}] {};
 \node (n_2) at (-.2,0)[label=left:{$x_2$}] {};
 \node (n_3) at (-.2,-.2)[label=left:{$x_3$}] {};
 \node (n_4) at (.2,.2)[label=right:{$y_1$}] {};
 \node (n_5) at (.2,0)[label=right:{$y_2$}] {};
 \node (n_6) at (.2,-.2)[label=right:{$y_3$}] {};
 \node (n_7) at (-.2,-.4)[label=left:{$x_4$}] {};
 \node (n_8) at (.2,-.4)[label=right:{$y_4$}] {};

\foreach \from/\to in
  { n_1/n_4,n_1/n_6, n_2/n_5,n_3/n_8,n_7/n_8}\draw (\from) -- (\to);
\end{tikzpicture}
\caption{$G$}
\label{g}
\end{minipage}
\begin{minipage}[b]{0.35\linewidth}
\centering
\begin{tikzpicture}[scale=4]
\tikzstyle{every node}=[circle, fill=black!,inner sep=0pt, minimum
width=4pt]

 \node (n_1) at (-.2,.2)[label=left:{$x_1$}] {};
 \node (n_2) at (-.2,0)[label=left:{$x_2$}] {};
 \node (n_3) at (-.2,-.2)[label=left:{$x_3$}] {};
 \node (n_4) at (.2,.2)[label=right:{$y_1$}] {};
 \node (n_5) at (.2,0)[label=right:{$y_2$}] {};
 \node (n_6) at (.2,-.2)[label=right:{$y_3$}] {};

\foreach \from/\to in
  { n_1/n_4,n_1/n_5,n_2/n_6,n_3/n_6}\draw (\from) -- (\to);
\end{tikzpicture}
\caption{$G^{\prime}$}
\label{gg}
\end{minipage}
\end{figure}\\
\end{example}

 
A collection $\Delta$ of subsets of $V= \{x_1, x_2, \ldots, x_n\}$ satisfying $\{x_i\} \in \Delta;1\le i\le n$ and if $F\in \Delta ~\text{and}~ G \subseteq F$ implies $G\in \Delta$ is called a {\it simplicial  complex} on vertex set $V$. The elements of $\Delta$ are called faces and a face $F\in \Delta$ of cardinality $i+1$ is called an $i$-{\it face} or a {\it face} of dimension $i$ of $\Delta$. A subset $F{^\prime}\subset V$ with $F{^\prime}\notin\Delta$ is called a {\it non-face} of $\Delta$. If every subset of $V$ is a member of $\Delta$, then $\Delta$ is called a {\it simplex}. The maximum of dimension of all the faces of a simplicial complex $\Delta$ is its dimension, denoted by $\dim(\Delta)$. For a subset $W\subseteq V$, the {\it induced subcomplex} of $\Delta$ on $W\subseteq V$, denoted by $\Delta_W$, is given by
\[
\Delta_W=\{F\in \Delta~|~F\subseteq W\}.
\]
 
For a simplicial complex $\Delta$ on vertex set $V=\{x_1,x_2,\ldots, x_n\}$, the \textit{Stanley-Reisner ideal} or {\it non-face ideal} of $\Delta$, denoted by $I_\Delta$, is a squarefree monomial ideal in $R=\Bbbk[x_1,x_2, \ldots, x_n]$ generated by all monomials $x_{i_1}x_{i_2}\cdots x_{i_r}$ such that $\{x_{i_1},x_{i_2},\ldots,x_{i_r}\}$ is a non-face of $\Delta$, and the quotient ring $\Bbbk[\Delta]=R/I_\Delta$ is called the {\it Stanley-Reisner ring} of $\Delta$. The natural map $\Delta\longleftrightarrow I_{\Delta}$ is bijection between the class of simplicial complexes on vertex set $V=\{x_1,x_2,\ldots,x_n\}$ and the class of squarefree monomial ideals in the polynomial ring $R=\Bbbk[x_1,x_2,\ldots,x_n]$. For more details, we refer to~\cite{Vill}.

It can be seen that the edge ideal $I(G)$ of $G$ is same as the Stanley-Reisner ideal $I_{\Delta(G)}$ of the independence complex $\Delta(G)$, where $\Delta(G)$ is a simplicial complex on vertex set $V=\{x_1,x_2,\ldots,x_n\}$ of $G$ given by
\[
\Delta(G)=\{S\subseteq V:~S~\text{is an independent subset of graph}~G\}.
\]

Now, we recall a well-known result, known as Hochster's formula, which is an important tool for computation of graded Betti numbers of Stanley-Reisner ring $k[\Delta]$.

\begin{theorem}\cite[Hochster's Formula]{hoch}
The $i$th $\mathbb{N}$-graded Betti number of the Stanley-Reisner ring $\Bbbk[\Delta]=R/I_\Delta$ in degree $j$ is given by
\begin{equation}
\beta_{i,j}(\Bbbk[\Delta])=\sum_{W\subseteq V, |W|=j}^{}\dim_\Bbbk \widetilde{H}_{j-i-1}(\Delta_W; \Bbbk).
\label{hoc}
\end{equation}
\end{theorem}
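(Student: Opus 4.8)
The plan is to realize the graded Betti numbers as dimensions of $\Tor$ modules and to extract the simplicial homology by passing to the fine $\mathbb{Z}^n$-grading carried by the monomial ideal $I_\Delta$. Since $\beta_{i,j}(\Bbbk[\Delta])=\dim_\Bbbk\Tor_i^R(\Bbbk[\Delta],\Bbbk)_j$ and $\Bbbk[\Delta]$ is a $\mathbb{Z}^n$-graded module, I would first refine the standard grading to the $\mathbb{Z}^n$-grading, so that each $\Tor$ module splits as a direct sum over multidegrees $\mathbf a\in\mathbb{Z}^n$. I would then compute these $\Tor$ modules using the Koszul complex $\K$ on $x_1,\dots,x_n$, which is a $\mathbb{Z}^n$-graded free resolution of $\Bbbk$ over $R$, so that $\Tor_i^R(\Bbbk[\Delta],\Bbbk)\cong H_i(\K\otimes_R\Bbbk[\Delta])$ as $\mathbb{Z}^n$-graded vector spaces.

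Next I would isolate, for each multidegree, the corresponding strand. Writing $\K_i=\bigoplus_{|\sigma|=i}R\,e_\sigma$ with $\sigma$ ranging over $i$-subsets of $[n]$ and $e_\sigma$ carrying multidegree $\chi_\sigma$ (the indicator vector of $\sigma$), a basis element $e_\sigma\otimes x^{\mathbf b}$ of $\K_i\otimes\Bbbk[\Delta]$ is nonzero exactly when $\supp(\mathbf b)\in\Delta$, and it lives in multidegree $\mathbf b+\chi_\sigma$. The crucial reduction is that the homology is supported only in squarefree multidegrees $\mathbf a=\chi_W$ for $W\subseteq V$: if some coordinate $a_k\ge 2$, then $k\in\supp(\mathbf b)$ for every contributing basis element, and the standard Koszul contraction in the $x_k$-direction (sending $e_\sigma\otimes x^{\mathbf b}$ to $\pm\,e_{\sigma\cup\{k\}}\otimes x^{\mathbf b-\chi_k}$ when $k\notin\sigma$, and to $0$ otherwise) is a contracting homotopy, so that strand is acyclic.

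Fixing a squarefree multidegree $\chi_W$, I would identify the strand explicitly. A basis element $e_\sigma\otimes x^{\mathbf b}$ sits in degree $\chi_W$ precisely when $\sigma\subseteq W$, $\mathbf b=\chi_{W\setminus\sigma}$, and $F:=W\setminus\sigma$ is a face of $\Delta_W$. The assignment $e_\sigma\mapsto F$ then gives a bijection between the homological-degree-$i$ basis and the faces $F\in\Delta_W$ of dimension $|W|-i-1$, under which the Koszul differential becomes, up to sign, the simplicial (co)boundary map of the reduced chain complex of $\Delta_W$; the summand for $\sigma=W$ corresponds to the empty face in dimension $-1$, which is what makes the resulting homology \emph{reduced}. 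Since we work over the field $\Bbbk$, the homology in either indexing has the same dimension, so $H_i\big((\K\otimes_R\Bbbk[\Delta])_{\chi_W}\big)\cong\widetilde H_{|W|-i-1}(\Delta_W;\Bbbk)$, whence $\beta_{i,\chi_W}(\Bbbk[\Delta])=\dim_\Bbbk\widetilde H_{|W|-i-1}(\Delta_W;\Bbbk)$.

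I expect the main obstacle to be exactly this last identification: matching the Koszul differential to the simplicial boundary with consistent signs and correctly accounting for the empty face so that one genuinely obtains reduced rather than unreduced homology. Once that is in place, the conclusion is immediate: summing $\beta_{i,\chi_W}$ over all $W\subseteq V$ with $|W|=j$, which are precisely the squarefree multidegrees of total degree $j$ contributing to the coarse grading, yields the stated formula.
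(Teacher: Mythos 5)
The paper does not actually prove this theorem: it is recalled from Hochster's article \cite{hoch} and used as a black box, so there is no in-paper argument to measure your proposal against. That said, your outline is a correct and essentially complete rendition of the standard proof (the one in Bruns--Herzog, Theorem 5.5.1): compute $\Tor_i^R(\Bbbk[\Delta],\Bbbk)$ via the $\mathbb{Z}^n$-graded Koszul complex on $x_1,\dots,x_n$, kill the non-squarefree strands, and identify the strand in a squarefree degree $\chi_W$ with the reduced simplicial (co)chain complex of $\Delta_W$. Your key observations are the right ones: if $a_k\ge 2$ then every surviving basis element $e_\sigma\otimes x^{\mathbf b}$ has $b_k\ge 1$ (namely $b_k=a_k-1$ if $k\in\sigma$ and $b_k=a_k$ if $k\notin\sigma$), which is precisely what makes the contracting homotopy well defined on that strand, and the element $e_W\otimes 1$ for $\sigma=W$ supplies the empty face in dimension $-1$, which is what produces \emph{reduced} homology. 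Two points to make explicit in a full write-up: first, since the Koszul differential lowers $|\sigma|$ and hence raises $|F|=|W\setminus\sigma|$, the identification naturally yields the reduced \emph{cochain} complex of $\Delta_W$, so the strand computes $\widetilde{H}^{\,|W|-i-1}(\Delta_W;\Bbbk)$; your appeal to field coefficients to replace this by $\widetilde{H}_{|W|-i-1}(\Delta_W;\Bbbk)$ of the same dimension is not a cosmetic remark but a genuinely needed step to reach the homology form stated in the paper. Second, the sign bookkeeping you flag (verifying $ds+sd=\pm\,\mathrm{id}$ on non-squarefree strands, and matching the Koszul signs to the simplicial coboundary signs) is routine but should be carried out; once it is, summing $\beta_{i,\chi_W}$ over all $W\subseteq V$ with $|W|=j$ gives exactly equation~(\ref{hoc}).
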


\section{Linear Strand of $R/I(\mathcal{H}(m,k))$}
In this section, we recall the notion of bipartite Kneser graphs and obtain a combinatorial formula for computing the graded Betti numbers in the linear strand of edge ideals of these graphs using the Hochster's formula. 
\begin{definition}\label{def}\rm
Let $m$ and $k$ be integers with $m\ge 2k$. Set $[m]=\{1,2,\ldots,m\}$. The {\it bipartite Kneser graph} $\mathcal{H}(m,k)$ is a bipartite graph with vertex set $V\left(\mathcal{H}(m,k)\right)=V_1\sqcup V_2$, where 
\[
V_1=\{A\subset [m]:~A~\text{is a $k$-element subset of}~[m]\}
\]
and
\[
V_2=\{B\subset [m]:~B~\text{is a $(m-k)$-element subset of}~[m]\},
\]
and for $A\in V_1$ and $B\in V_2$, the pair $\{A,B\}\in E\left(\mathcal{H}(m,k)\right)$ if $A\subseteq B$.
\end{definition}
It is easy to see that the bipartite Kneser graph $\mathcal{H}(m,k)$ has 2$m\choose k$ vertices. For, $A=\{i_1,i_2,\ldots,i_k\}\subset [m]$ and $B=\{j_1,j_2,\ldots,j_{m-k}\}\subset [m]$ with $|B|=m-k$, we see that $A\subseteq B$ iff $B=A\sqcup S$, where $S\subset [m]\setminus A$ with $|S|=m-2k$. Thus, the number of such subsets $B\subset [m]$ is ${m-k\choose k}$.  Also, a $(m-k)$-subset $B\subset [m]$ contains exactly ${m-k\choose k}$ $k$-subsets. Thus each vertex of the bipartite Kneser graph $\mathcal{H}(m,k)$ is of the same degree $m-k\choose k$, hence it is a regular graph having ${m-k\choose k}{m\choose k}$ edges.

For $k=1$,  the bipartite Kneser graph $\mathcal{H}(m,1)$ is isomorphic to the $m$-crown graph $\mathcal{C}_{m,m}$, (see~\cite{shpa}, for definition). Rather and Singh~\cite{shpa} computed the graded Betti numbers, regularity and projective dimension of crown graphs, and hence of bipartite Kneser graph $\mathcal{H}(m,1)$.

\begin{example}\rm The bipartite Kneser graph $\mathcal{H}(5,2)$ is shown in Figure~\ref{h52} as follows. 
\begin{figure}[ht]
\begin{tikzpicture}[scale=3.75]
\tikzstyle{every node}=[circle, fill=black!,inner sep=1pt, minimum
width=6pt]
 \node (n_1) at (0,0)[label=left:{$\{1,2\}$}] {};
 \node (n_2) at (0,-0.2)[label=left:{$\{1,3\}$}] {};
 \node (n_3) at (0,-0.4)[label=left:{$\{1,4\}$}] {};
 \node (n_4) at (0,-0.6)[label=left:{$\{1,5\}$}] {};
 \node (n_5) at (0,-0.8)[label=left:{$\{2,3\}$}] {};
 \node (n_6) at (0,-1)[label=left:{$\{2,4\}$}] {};
 \node (n_7) at (0,-1.2)[label=left:{$\{2,5\}$}] {};
 \node (n_8) at (0,-1.4)[label=left:{$\{3,4\}$}] {};
 \node (n_9) at (0,-1.6)[label=left:{$\{3,5\}$}] {};
 \node (n_10) at (0,-1.8)[label=left:{$\{4,5\}$}] {};
 \node (n_11) at (1,0)[label=right:{$\{1,2,3\}$}] {};
 \node (n_12) at (1,-0.2)[label=right:{$\{1,2,4\}$}] {};
 \node (n_13) at (1,-0.4)[label=right:{$\{1,2,5\}$}] {};
 \node (n_14) at (1,-0.6)[label=right:{$\{1,3,4\}$}] {};

 \node (n_15) at (1,-0.8)[label=right:{$\{1,3,5\}$}] {};
 \node (n_16) at (1,-1)[label=right:{$\{1,4,5\}$}] {};
 \node (n_17) at (1,-1.2)[label=right:{$\{2,3,4\}$}] {};
 \node (n_18) at (1,-1.4)[label=right:{$\{2,3,5\}$}] {};
 \node (n_19) at (1,-1.6)[label=right:{$\{2,4,5\}$}] {};
 \node (n_20) at (1,-1.8)[label=right:{$\{3,4,5\}$}] {};

\foreach \from/\to in
  { n_1/n_11,n_1/n_12,n_1/n_13,n_2/n_11,n_2/n_14,n_2/n_15,n_3/n_12,n_3/n_14,n_3/n_16,n_4/n_13,n_4/n_15,n_4/n_16,n_5/n_11,n_5/n_17,n_5/n_18,n_6/n_12,n_6/n_17,n_6/n_19,n_7/n_13,n_7/n_18,n_7/n_19,n_8/n_14,n_8/n_17,n_8/n_20,n_9/n_15,n_9/n_18,n_9/n_20,n_10/n_16,n_10/n_19,n_10/n_20}\draw (\from) -- (\to);  
\end{tikzpicture}
\caption{$\mathcal{H}(5,2)$}
\label{h52}
\end{figure}
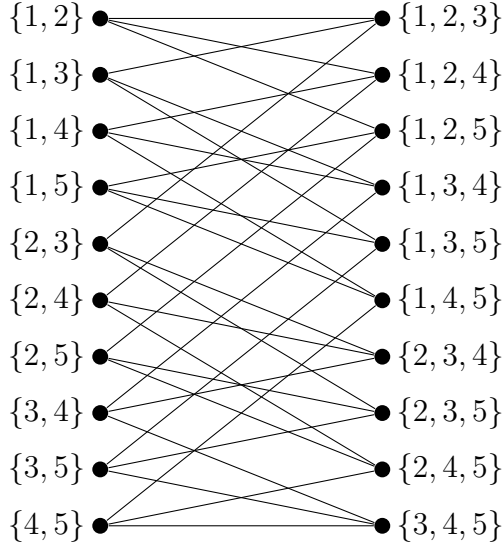
\end{example}

To compute the Betti numbers in the linear strand of edge ideals of bipartite Kneser graphs, we introduce the following notations.

\begin{notation}\noindent 
{\rm
\begin{enumerate}
\item 
		
For a finite set $X$, the family of all subsets of $X$ of size $s$ is denoted by $X^{(s)}$. For positive integers $m,q$ and $r$, we set $$\mathfrak{S}_{m,q,r}=\left([m]^{(r)}\right)^{(q)}=\left\{\mathcal{A} \subseteq [m]^{(r)}: \mid \mathcal{A} \mid=q\right\}.$$

\item For $i \in [m]=\{1,2,\ldots,m\}$, let an element $\mathcal{A}$ of $\mathfrak{S}_{m,q,r}$ have property $P(i)$ if and only if  $i \in \bigcap\limits_{A \in \mathcal{A}}A.$ Further, for any subset $T \subseteq \{1,2,\ldots,m\},$ we define

\begin{eqnarray*}\mathfrak{S}_{m,q,r}^{\supseteq T}&=&\left\{ \mathcal{A} \in \mathfrak{S}_{m,q,r}: \mathcal{A}~ \text{has ~atleast~ the~ property}~ P(j)~ \text{for~each}~j\in T  \right\} \\
\mathfrak{S}_{m,q,r}^{= T} &=& \left\{ \mathcal{A} \in \mathfrak{S}_{m,q,r}: \mathcal{A}~ \text{has ~precisely~ the~ property}~ P(j)~ \text{for~each}~j\in T  \right\}.
\end{eqnarray*} 
\end{enumerate}

Now, we set 
\[
N_{m,q,r}^{\supseteq T}=|\mathfrak{S}_{m,q,r}^{\supseteq T}| ~\text{and}~ N_{m,q,r}^{= T}=\left|\mathfrak{S}_{m,q,r}^{=T}\right|.
\]
}
\end{notation}

\begin{lemma}\rm
With the same notations as above, we have
\[
N_{m,q,r}^{=\emptyset}=\sum_{j=0}^{r}(-1)^j{m\choose j}\binom{{m-j\choose r-j}}{q}.
\]
\end{lemma}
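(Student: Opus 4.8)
The plan is to express $N_{m,q,r}^{=\emptyset}$ in terms of the simpler ``at least'' counts $N_{m,q,r}^{\supseteq T}$ by inclusion--exclusion over the ground set $[m]$, and then to evaluate those counts in closed form. The whole argument is combinatorial and uses only the definitions in the preceding Notation.

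First I would compute $N_{m,q,r}^{\supseteq T}$ for an arbitrary $T\subseteq[m]$ with $|T|=j$. An element $\mathcal{A}\in\mathfrak{S}_{m,q,r}^{\supseteq T}$ is exactly a choice of $q$ distinct $r$-subsets of $[m]$, each containing $T$. An $r$-subset containing $T$ is determined by its remaining $r-j$ elements, chosen from the $m-j$ elements of $[m]\setminus T$, so there are $\binom{m-j}{r-j}$ such $r$-subsets; choosing $q$ of them gives
\[
N_{m,q,r}^{\supseteq T}=\binom{\binom{m-j}{r-j}}{q},
\]
which depends only on $j=|T|$. In particular this value is $0$ once $j>r$, because then $\binom{m-j}{r-j}=0$ while $q\ge 1$.

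Next I would relate the two families of counts. Every $\mathcal{A}\in\mathfrak{S}_{m,q,r}$ has a well-defined common intersection $C(\mathcal{A})=\bigcap_{A\in\mathcal{A}}A$, and by definition $\mathcal{A}\in\mathfrak{S}_{m,q,r}^{\supseteq T}$ precisely when $T\subseteq C(\mathcal{A})$, while $\mathcal{A}\in\mathfrak{S}_{m,q,r}^{=S}$ precisely when $C(\mathcal{A})=S$. Sorting the members of $\mathfrak{S}_{m,q,r}^{\supseteq T}$ according to the exact value of $C(\mathcal{A})$ yields the disjoint decomposition $\mathfrak{S}_{m,q,r}^{\supseteq T}=\bigsqcup_{S\supseteq T}\mathfrak{S}_{m,q,r}^{=S}$, that is, $N_{m,q,r}^{\supseteq T}=\sum_{S\supseteq T}N_{m,q,r}^{=S}$. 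Möbius inversion on the Boolean lattice of subsets of $[m]$ then gives, taking $T=\emptyset$,
\[
N_{m,q,r}^{=\emptyset}=\sum_{S\subseteq[m]}(-1)^{|S|}N_{m,q,r}^{\supseteq S}.
\]

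Finally I would substitute the closed form and collect terms by the size $j=|S|$. Since $N_{m,q,r}^{\supseteq S}$ depends only on $|S|$ and there are $\binom{m}{j}$ subsets of size $j$, the right-hand side becomes $\sum_{j=0}^{m}(-1)^j\binom{m}{j}\binom{\binom{m-j}{r-j}}{q}$; discarding the terms with $j>r$, which vanish by the remark above, leaves exactly the asserted sum. The argument is essentially routine, and the only point needing care is the bookkeeping in the inclusion--exclusion step, namely checking that the ``at least property $P(j)$'' sets are the correct up-sets of $C(\mathcal{A})$ so that the inversion applies, and confirming that the out-of-range terms genuinely drop out. As a sanity check one may verify the case $q=1$, where the identity $\binom{m}{j}\binom{m-j}{r-j}=\binom{m}{r}\binom{r}{j}$ collapses the sum to $\binom{m}{r}\sum_{j=0}^{r}(-1)^j\binom{r}{j}=0$ for $r\ge 1$, consistent with the fact that a single $r$-set never has empty intersection.
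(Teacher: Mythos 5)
Your proposal is correct and follows essentially the same route as the paper: inclusion--exclusion (equivalently, M\"obius inversion on the Boolean lattice) relating $N_{m,q,r}^{=\emptyset}$ to the counts $N_{m,q,r}^{\supseteq T}$, which depend only on $|T|$ and equal $\binom{\binom{m-j}{r-j}}{q}$, followed by collecting terms by $|T|$ and discarding the vanishing terms with $j>r$. Your write-up is in fact slightly more complete than the paper's, since you justify the closed form for $N_{m,q,r}^{\supseteq T}$ and the disjoint decomposition $\mathfrak{S}_{m,q,r}^{\supseteq T}=\bigsqcup_{S\supseteq T}\mathfrak{S}_{m,q,r}^{=S}$ rather than citing them implicitly.
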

\begin{proof}
From the principle of inclusion-exclusion~(see~\cite{MA}, Chapter 5), we see that
\begin{eqnarray*}
N_{m,q,r}^{=\emptyset}&=&\sum_{T \subseteq \{1,2,\ldots,m\}}(-1)^{|T|}N_{m,q,r}^{\supseteq T}
\end{eqnarray*}
Since $N_{m,q,r}^{\supseteq T}$ depends only on the
size $|T| = t,$ we write $N_{m,q,r}^{\supseteq T}=N_{m,q,r}^{\geq t}$ for $|T| = t$. Also, in this case
 $N_{m,q,r}^{= T}=N_{m,q,r}^{= t}$ depends only on the cardinality of $T.$ Hence

\begin{eqnarray*}
N_{m,q,r}^{=\emptyset}=	N_{m,q,r}^{=0}&=& \sum_{t=0}^{m}(-1)^t{m\choose t}N_{m,q,r}^{\geq t}\\
	  &=&\sum_{t=0}^{r}(-1)^t {m\choose t}\binom{{m-t\choose r-t}}{q},
\end{eqnarray*}

since $N_{m,q,r}^{\geq t}=0$ for $t>r.$
\end{proof}

\begin{remark}
	\noindent 
	\rm{
	
\begin{enumerate}
	\item Note that
	\[
	N_{m,q,r}^{=t}=N_{m-t,q,r-t}^{=0}=\sum_{j=0}^{r-t}(-1)^j{m-t \choose j}\binom{{m-t-j\choose r-t-j}}{q}.
	\]
	\item For $q=1, t\neq r,$ we have 
	\begin{eqnarray*}
	N_{m,q,r}^{=t}&=&\sum_{j=0}^{r-t}(-1)^j{m-t \choose j}{m-t-j\choose r-t-j}\\
	&=& \sum_{j=0}^{r-t}(-1)^j{m-t \choose r-t}{r-t\choose j}\\
	&=& {m-t \choose r-t}\sum_{j=0}^{r-t}(-1)^j{r-t\choose j}\\
	&=&0.
	\end{eqnarray*}

\item 	$N_{m,q,r}^{=t}=\left\{
\begin{array}{lll}
1  &;~q=1,  t=r \\
0 & ;~q \neq 1, t=r.
\end{array}\right.
$
\end{enumerate}		
}
\end{remark}

\noindent
\begin{theorem}\label{mainb}
Let $\mathcal{H}(m,k)$ be the bipartite Kneser graph with vertex set $V(\mathcal{H}(m,k))=V_1 \sqcup V_2$ as given in Definition~\ref{def}. Then 
\[
\beta_{i,i+1}(\mathcal{H}(m,k))=\sum_{r+s=i+1}\sum_{t=k}^{m-k}\binom{{t\choose k}}{r}{m\choose t}N_{m,s,m-k}^{=t}.
\]
\end{theorem}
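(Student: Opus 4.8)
The plan is to read off $\beta_{i,i+1}$ directly from Hochster's formula \eqref{hoc}. Writing $G=\mathcal{H}(m,k)$ and $V=V_1\sqcup V_2$, the linear strand is the case $j=i+1$, for which $j-i-1=0$, so only reduced $0$-th homology survives:
\[
\beta_{i,i+1}(\mathcal{H}(m,k))=\sum_{\substack{W\subseteq V \\ |W|=i+1}}\dim_{\Bbbk}\widetilde{H}_{0}(\Delta_W;\Bbbk).
\]
Since $\Delta(G)$ is the independence complex, $\Delta_W$ is the independence complex of the induced graph $G[W]$, and its connected components (which is all that $\widetilde{H}_0$ sees) are exactly the connected components of the complement graph $G^c[W]$. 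Thus $\dim_{\Bbbk}\widetilde{H}_0(\Delta_W)=c(G^c[W])-1$, where $c(\cdot)$ is the number of connected components, and the whole problem becomes a count of induced subgraphs of $G^c$ with precisely two components.

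The structural input I would establish next is the shape of $G^c$. Because $G$ is bipartite with parts $V_1$ (the $k$-subsets) and $V_2$ (the $(m-k)$-subsets), each of $V_1,V_2$ is a clique in $G^c$, while a cross pair $\{A,B\}$ with $A\in V_1$, $B\in V_2$ is an edge of $G^c$ exactly when $A\not\subseteq B$. Writing $W=W_1\sqcup W_2$ with $W_1\subseteq V_1$, $W_2\subseteq V_2$, this means $G^c[W]$ can have \emph{at most} two components, one inside each clique. Hence $\dim_{\Bbbk}\widetilde{H}_0(\Delta_W)$ equals $1$ precisely when both $W_1$ and $W_2$ are nonempty and $G^c[W]$ carries no cross edge, i.e. $A\subseteq B$ for every $A\in W_1$ and every $B\in W_2$; otherwise it is $0$. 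So $\beta_{i,i+1}$ counts the pairs $(W_1,W_2)$ of nonempty families with $|W_1|+|W_2|=i+1$ satisfying this total-containment condition.

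For the counting I would use that ``$A\subseteq B$ for all $A\in W_1$, $B\in W_2$'' is equivalent to $\bigcup_{A\in W_1}A\subseteq\bigcap_{B\in W_2}B$, and then stratify by $W_2$ and by its \emph{exact} intersection $T=\bigcap_{B\in W_2}B$. Setting $r=|W_1|$, $s=|W_2|$ (both $\ge 1$, $r+s=i+1$) and $t=|T|$, the families $W_2\in\mathfrak{S}_{m,s,m-k}$ with $\bigcap_{B}B=T$ number $N_{m,s,m-k}^{=t}$, and summing over the $\binom{m}{t}$ choices of $T$ of size $t$ contributes the factor $\binom{m}{t}N_{m,s,m-k}^{=t}$. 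For each such $W_2$ the admissible $W_1$ are exactly the $r$-element families of $k$-subsets of $T$, giving $\binom{\binom{t}{k}}{r}$ choices. A $k$-subset of $T$ exists only for $t\ge k$, and $T$ sits inside $(m-k)$-sets so $t\le m-k$; this pins the range $k\le t\le m-k$. Multiplying the three factors and summing over $r+s=i+1$ yields the asserted formula.

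The main obstacle I anticipate is the bookkeeping in this last sum. The delicate points are: (i) using the exact-intersection count $N_{m,s,m-k}^{=t}$ rather than the containment version $N_{m,s,m-k}^{\supseteq t}$, since only the exact intersection $T$ determines the pool $\binom{t}{k}$ of available $k$-subsets for $W_1$, and the $\supseteq$ version would overcount; (ii) justifying the index range $k\le t\le m-k$ and the requirement $r,s\ge 1$ forced by the ``two nonempty cliques'' condition (the $r=0$ or $s=0$ terms must be discarded as they give a single clique and hence $\widetilde{H}_0=0$). The homological reduction through $\widetilde{H}_0$ and the observation that $G^c[W]$ never exceeds two components are conceptually the heart, but it is this final identification with $\binom{\binom{t}{k}}{r}\binom{m}{t}N_{m,s,m-k}^{=t}$ that requires care.
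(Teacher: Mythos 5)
Your proposal is correct and follows essentially the same route as the paper: Hochster's formula specialized to $j=i+1$, identification of the subsets $W=\mathcal{A}_r\cup\mathcal{B}_s$ for which $\Delta_W$ is disconnected via the total-containment condition $A\subseteq\bigcap_{B\in\mathcal{B}_s}B$, and the identical stratified count $\binom{\binom{t}{k}}{r}\binom{m}{t}N_{m,s,m-k}^{=t}$ over $r+s=i+1$ and $k\le t\le m-k$. Your complement-graph framing (two cliques in $G^c$, hence at most two components, hence $\dim_\Bbbk\widetilde{H}_0(\Delta_W)\in\{0,1\}$) is only a repackaging of the paper's connectivity argument, though it does make explicit the ``at most two components'' fact and the exclusion of the $r=0$ or $s=0$ terms, which the paper leaves implicit.
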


\begin{proof}
Set $\Delta=\Delta(\mathcal{H}(m,k))$. Then, in view of Hochster's Formula, we have
\begin{equation}
\beta_{i,i+1}(\mathcal{H}(m,k))=\sum_{W\subseteq V, |W|=i+1}^{}\dim_\Bbbk \widetilde{H}_{0}(\Delta_W; \Bbbk).
\label{31}
\end{equation}
If $W\subseteq V_1$ or $W\subseteq V_2$, then $\Delta_W$ is a simplex, so has zero reduced homology. Now for $r,s\ge 1$, let $W\subseteq V_1 \cup V_2$ is of the form $W_{r,s}=\mathcal{A}_r \cup \mathcal{B}_s$, where $\mathcal{A}_r=\{A_1,\ldots,A_r\}, \mathcal{B}_s=\{B_1,\ldots,B_s\},$ with $A_i \in [m]^{(k)}$ and $B_j \in [m]^{(m-k)}$ for all $1 \leq i \leq r, 1 \leq j \leq s$. We know that  $\widetilde{H}_{0}(\Delta_W; \Bbbk) =  \Bbbk$  if and only if $\Delta_W$ has two connected components. Observe that if $W$ is of the form $W_{r,s}$ as given above, then $\Delta_W$ has two connected components if and only if $A \subseteq \bigcap\limits_{B \in \mathcal{B}_s} B$ for all $A \in \mathcal{A}_r$. Let $W'_{r,s}=\mathcal{A}_r \cup \mathcal{B}_s$ such that $A \subseteq \bigcap\limits_{B \in \mathcal{B}_s} B$ for all $A \in \mathcal{A}_r$, where $\mathcal{A}_r, \mathcal{B}_s$ are defined as above. Then, in view of equation~(\ref{31}), we have 
$
\beta_{i,i+1}(\mathcal{H}(m,k))=\text{number of possible subsets}~W'_{r,s}~\text{such that}~r+s=i+1.
$
 Observe that for any such subset $W'_{r,s}$, we have $k\le \left|\bigcap\limits_{B \in \mathcal{B}_s} B\right|\le m-k$. Further, the number of subsets  of the set $[m]^{(m-k)}$ of type $\mathcal{B}_s$ with $\left|\bigcap\limits_{B \in \mathcal{B}_s}  B\right|=t$, is given by ${m\choose t}N_{m,s,m-k}^{=t}$ . Also, the number subsets of the set $[m]^{(k)}$ of type  $\mathcal{A}_r$ such that $\left|\bigcap\limits_{B \in \mathcal{B}_s}  B\right|=t$ and $A \subseteq \bigcap\limits_{B \in \mathcal{B}_s} B$ for all $A \in \mathcal{A}_r$,  is given by
$
\binom{{t\choose k}}{r}.
$
Thus, the number of subsets $W'_{r,s}$ with $r+s=i+1$  equals to
\[
\sum_{r+s=i+1}\sum_{t=k}^{m-k}\binom{{t\choose k}}{r}{m\choose t}N_{m,s,m-k}^{=t}
\]  
and so, we have
\[
\beta_{i,i+1}(\mathcal{H}(m,k))=\sum_{r+s=i+1}\sum_{t=k}^{m-k}\binom{{t\choose k}}{r}{m\choose t}N_{m,s,m-k}^{=t},
\]
as desired.
\end{proof}
\begin{example}\rm 
The graded Betti numbers $\beta_{i,i+1}\left(R/I(\mathcal{H}(5,2))  \right) $ by using Theorem~\ref{mainb} can be computed as follows: 

\begin{align*}
\beta_{1,2}(\mathcal{H}(5,2))& =  \sum_{r+s=2}\sum_{t=2}^{3}\binom{{t\choose 2}}{r}{5\choose t}N_{5,s,3}^{=t}=\sum_{t=2}^{3}{t\choose 2}{5\choose t}N_{5,1,3}^{=t}\\
&={2\choose 2}{5\choose 2}N_{5,1,3}^{=2}+{3\choose 2}{5\choose 3}N_{5,1,3}^{=3}\nonumber\\
&=0+3{5\choose 3}=30,\nonumber\\
\beta_{2,3}(\mathcal{H}(5,2))& = \sum_{r+s=3}\sum_{t=2}^{3}\binom{{t\choose 2}}{r}{5\choose t}N_{5,s,3}^{=t}=\sum_{t=2}^{3}\binom{{t\choose 2}}{1}{5\choose t}N_{5,2,3}^{=t}+\sum_{t=2}^{3}\binom{{t\choose 2}}{2}{5\choose t}N_{5,1,3}^{=t}\nonumber\\
&=\left[{2\choose 2}{5\choose 2}N_{5,2,3}^{=2}+{3\choose 2}{5\choose 3}N_{5,2,3}^{=3}\right]\\
&\phantom{{}={}{2\choose 2}}+ \left[\binom{{2\choose 2}}{2}{5\choose 2}N_{5,1,3}^{=2}+\binom{{3\choose 2}}{2}{5\choose 3}N_{5,1,3}^{=3}\right]\nonumber\\
&=\left[1{5\choose 2}N_{3,2,1}^{=0}+0\right]+\left[0+3{5\choose 3}\right]
\nonumber\\
&=\left[10\sum_{j=0}^{1}{(-1)^j}{3\choose j}\binom{{{3-j}\choose {1-j}}}{2}\right]+30
\nonumber\\
&=\left[10{(-1)^0}{3\choose 0}\binom{{3\choose 1}}{2}+10{(-1)^1}{3\choose 1}\binom{{2\choose 0}}{2}\right]+30=\left[30+0\right]+30=60,\nonumber
\end{align*}
\begin{align*}
\beta_{3,4}(\mathcal{H}(5,2))& = \sum_{r+s=4}\sum_{t=2}^{3}\binom{{t\choose 2}}{r}{5\choose t}N_{5,s,3}^{=t}=\sum_{t=2}^{3}\binom{{t\choose 2}}{1}{5\choose t}N_{5,3,3}^{=t}+\sum_{t=2}^{3}\binom{{t\choose 2}}{2}{5\choose t}N_{5,2,3}^{=t}\nonumber\\
&+\sum_{t=2}^{3}\binom{{t\choose 2}}{3}{5\choose t}N_{5,1,3}^{=t}=\left[\binom{{2\choose 2}}{1}{5\choose 2}N_{5,3,3}^{=2}+\binom{{3\choose 2}}{1}{5\choose 3}N_{5,3,3}^{=3}\right]\nonumber\\
&+\left[\binom{{2\choose 2}}{2}{5\choose 2}N_{5,2,3}^{=2}+\binom{{3\choose 2}}{2}{5\choose 3}N_{5,2,3}^{=3}\right]\\
&\phantom{{}={}{2\choose 2}}
+\left[\binom{{2\choose 2}}{3}{5\choose 2}N_{5,1,3}^{=2}+\binom{{3\choose 2}}{3}{5\choose 3}N_{5,1,3}^{=3}\right]\nonumber\\
&=\left[1{5\choose 2}N_{3,3,1}^{=0}+0\right]+\left[0+0\right]+\left[0+1{5\choose 3}\right]\nonumber\\
&=\left[10\sum_{j=0}^{1}{(-1)^j}{3\choose j}\binom{{{3-j}\choose {1-j}}}{3}\right]+10
\nonumber\\
&=\left[10{(-1)^0}{3\choose 0}\binom{{3\choose 1}}{3}+10{(-1)^1}{3\choose 1}\binom{{2\choose 0}}{3}\right]+10=\left[10+0\right]+10=20,
\end{align*} 
and $\beta_{i,i+1}(R/I(\mathcal{H}(5,2))=0$ for $i\ge 4$. We have seen the Betti numbers of $R/I(\mathcal{H}(5,2))$ computed using Theorem~\ref{mainb} are same as computed using Singular 2.0~\cite{sin}.
\end{example}


\section{Regularity of powers of edge ideals of bipartite Kneser graphs}
\noindent

 The regularity of edge ideals of a graph is an important homological invariant studied by various authors, see~\cite{AB,CH,hawo,janase,jase,katz,wood} and references therein. The regularity of bipartite Kneser graph $\mathcal{H}(m,1)$ has been computed by Rather and Singh~\cite{shpa}.

Before proceeding further, we shall make some remarks about the bipartite Kneser graphs as follows. 
\begin{remark}\label{3dst}\noindent \rm 
\begin{enumerate}
	\item For a bipartite Kneser graph $\mathcal{H}(m,k)$, it can be easily seen that $e=\{A,B\}$ and $e^\prime=\{A^{\prime},B^{\prime}\}$ in $\mathcal{H}(m,k)$ are 3-disjoint if and only if  $A\nsubseteq B^{\prime}$ and $A^{\prime}\nsubseteq B$.  

\item Let $S=\{j_1,j_2,\ldots,j_{m-2k}\}\subset [m]$ be a fixed subset of $[m]$ having $m-2k$ elements. Consider the collection $E_S$ of all edges in $\mathcal{H}(m,k)$ of the form
\[
e=\{\{i_1,i_2,\ldots,i_k\},\{i_1,i_2,\ldots,i_k,j_1,j_2,\ldots,j_{m-2k}\}\}
\] 
where $\{i_1,i_2,\ldots,i_k\}\subseteq [m]\setminus\{j_1,j_2,\ldots,j_{m-2k}\}$. Then, clearly, by the Remark~\ref{3dst}(1), $E_S$ is a pairwise $3$-disjoint subset of $E(\mathcal{H}(m,k))$ having 
$
{2k\choose k}
$
elements. 

\end{enumerate}	
\end{remark}
\begin{lemma}
 Let $E_S$ be as given in Remark~\ref{3dst}. Then $E_S$ is a maximal pairwise $3$-disjoint subset of $E(\mathcal{H}(m,k))$.
\end{lemma}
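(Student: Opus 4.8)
The goal is to show that the pairwise $3$-disjoint set $E_S$, whose cardinality is $\binom{2k}{k}$, cannot be properly enlarged: every edge $e'$ of $\mathcal{H}(m,k)$ that is not already in $E_S$ fails to be $3$-disjoint from at least one edge in $E_S$. By Remark~\ref{3dst}(1), the condition for two edges $\{A,B\}$ and $\{A',B'\}$ to be $3$-disjoint is $A\nsubseteq B'$ and $A'\nsubseteq B$, so maximality amounts to producing, for each candidate edge, a witnessing containment.

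The plan is to take an arbitrary edge $e'=\{A',B'\}\in E(\mathcal{H}(m,k))$ with $A'\in[m]^{(k)}$, $B'\in[m]^{(m-k)}$ and $A'\subseteq B'$, and assume $e'\notin E_S$; I must then exhibit an edge $e=\{A,B\}\in E_S$ with $A\subseteq B'$ or $A'\subseteq B$, which by the contrapositive of Remark~\ref{3dst}(1) destroys $3$-disjointness. Recall that edges of $E_S$ have the special form $B=A\sqcup S$ with $A\subseteq [m]\setminus S$ and $|A|=k$, where $S$ is the fixed $(m-2k)$-subset. First I would analyze the $(m-k)$-set $B'$. Its complement $[m]\setminus B'$ has exactly $k$ elements. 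The key observation is that $B'$ contains at least one $k$-subset of the form $A\subseteq[m]\setminus S$ that can serve as the $A$-part of an edge of $E_S$: indeed, since $|B'|=m-k$ and $|S|=m-2k$, we have $|B'\setminus S|\ge (m-k)-(m-2k)=k$, so I can choose $A\subseteq B'\setminus S$ with $|A|=k$. This $A$ satisfies $A\subseteq[m]\setminus S$ and $A\subseteq B'$, and the corresponding edge $e=\{A,\,A\sqcup S\}$ lies in $E_S$. Then $A\subseteq B'$ already forces $e$ and $e'$ to fail $3$-disjointness, so $E_S\cup\{e'\}$ is not pairwise $3$-disjoint.

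The one case that needs separate care is when the edge $e$ produced this way happens to equal $e'$, i.e.\ when $e'$ is itself of the $E_S$ form; but that is excluded by the standing hypothesis $e'\notin E_S$, and one should check that whenever $e'\notin E_S$ the inequality $|B'\setminus S|\ge k$ still leaves a genuine choice of $A$ distinct from the $A'$-part, or else that $A\subseteq B'$ with $e\ne e'$ can always be arranged. I expect the only real subtlety to be confirming that the chosen $A$ yields an edge of $E_S$ that is genuinely different from $e'$ and that the containment is strict enough to apply Remark~\ref{3dst}(1); the counting inequality $|B'\setminus S|\ge k$ does the heavy lifting, so the main obstacle is bookkeeping rather than a deep idea. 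I would close by remarking that maximality here is with respect to inclusion of $3$-disjoint sets and does not assert that $E_S$ has maximum cardinality among all such sets — that stronger statement, relating to $\mathrm{ind}(\mathcal{H}(m,k))=\binom{2k}{k}$, is a separate matter addressed elsewhere.
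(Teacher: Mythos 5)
Your proof is correct, but it runs in the opposite direction from the paper's. The paper fixes the $k$-set end $A'=\{s_1,\ldots,s_k\}$ of the candidate edge $e'=\{A',B'\}$ and shows that $A'$ is either a vertex of, or adjacent to a vertex of, some edge in $E_S$: it splits into cases according as $A'\subseteq[m]\setminus S$, $A'\subseteq S$, or $A'$ meets both, and in the mixed case extends $A'\cap([m]\setminus S)$ to a $k$-element set $A\subseteq[m]\setminus S$, giving the witnessing containment $A'\subseteq A\sqcup S=B$ with $\{A,B\}\in E_S$. You instead work with the $(m-k)$-set end $B'$: since $|B'\setminus S|\ge (m-k)-(m-2k)=k$, any $k$-subset $A\subseteq B'\setminus S$ yields an edge $\{A,A\sqcup S\}\in E_S$ with $A\subseteq B'$, which by Remark~\ref{3dst}(1) already destroys $3$-disjointness. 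This is a genuinely different witness ($A\subseteq B'$ rather than $A'\subseteq B$), and it buys a uniform one-line counting argument in place of the paper's three-way case analysis; the paper's version, in exchange, makes explicit that the $k$-set end of every edge is dominated by the vertices of $E_S$. Your closing worry about the produced edge $e$ coinciding with $e'$ is vacuous: $e\in E_S$ and $e'\notin E_S$ force $e\neq e'$, and even if the chosen $A$ happens to equal $A'$, the containment $A\subseteq B'$ (indeed the shared vertex) still violates $3$-disjointness, so no extra bookkeeping is needed and there is no requirement that $A$ differ from $A'$. Your final remark correctly separates inclusion-maximality, which is all the lemma asserts, from maximum cardinality, which enters only through Remark~\ref{max.} to give the lower bound $\mathrm{ind}(\mathcal{H}(m,k))\ge\binom{2k}{k}$.
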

\begin{proof}
	 For, let 
\[
e^\prime=\{\{s_1,s_2,\ldots,s_k\},\{s_1,s_2,\ldots,s_k,t_1,t_2,\ldots,t_{m-2k}\}\}
\] 
be any edge of $\mathcal{H}(m,k)$ other than the edges in $E_S$. Now, if $\{s_1,s_2,\ldots,s_k\}\subseteq [m]\setminus\{j_1,j_2,\ldots,j_{m-2k}\}$ or $\{s_1,s_2,\ldots,s_k\}\subseteq \{j_1,j_2,\ldots,j_{m-2k}\}$, then by definition of $\mathcal{H}(m,k)$, the end vertex $\{s_1,s_2,\ldots,s_{k}\}$ of $e^\prime$ is also an end vertex of some edge in $E_S$ or $\{s_1,s_2,\ldots,s_{k}\}$ is connected to an end vertex of some edge in $E_S$, and so we are done in this case. Now, if neither $\{s_1,s_2,\ldots,s_k\}\subseteq [m]\setminus\{j_1,j_2,\ldots,j_{m-2k}\}$ nor $\{s_1,s_2,\ldots,s_k\}\subseteq \{j_1,j_2,\ldots,j_{m-2k}\}$, then $\{s_1,s_2,\ldots,s_k\}\cap \{j_1,j_2,\ldots,j_{m-2k}\}\ne \emptyset$ and $\{s_1,s_2,\ldots,s_k\}\cap [m]\setminus \{j_1,j_2,\ldots,j_{m-2k}\}\ne \emptyset$. Without loss of generality, we may assume that $\{s_1,s_2,\ldots,s_k\}\cap \{j_1,j_2,\ldots,j_{m-2k}\}=\{s_1,s_2,\ldots,s_p\}=\{j_1,j_2,\ldots,j_p\}$ with $p<k$ and $\{s_1,s_2,\ldots,s_k\}\cap [m]\setminus \{j_1,j_2,\ldots,j_{m-2k}\}=\{s_{p+1},\ldots,s_k\}$. Observe that $\{s_{p+1},\ldots,s_k\}\subset [m]\setminus \{j_1,j_2,\ldots,j_{m-2k}\}$ is a subset of some $k$-element subset, say $A$, of $[m]\setminus \{j_1,j_2,\ldots,j_{m-2k}\}$. Then 
\[
\{s_1,s_2,\ldots,s_k\}\subseteq A\sqcup \{j_1,j_2,\ldots,j_{m-2k}\}.
\]
Clearly, $A\sqcup \{j_1,j_2,\ldots,j_{m-2k}\}$ is an end vertex of one of the edges in $E_S$. Thus, $\{s_1,s_2,\ldots,s_{k}\}$ is connected to an end vertex of some edge in $E_S$ in this case also. Hence, $E_S$ is a maximal pairwise $3$-disjoint subset of $E(\mathcal{H}(m,k))$. 
\end{proof}
\begin{remark}\rm
Since the induced matching number is the maximum size of all maximal pairwise 3-disjoint subsets of an edge set of a graph, we have
\[
{\rm ind}(\mathcal{H}(m,k))\geq |E_S|.
\]
\label{max.}
\end{remark}
Thus, in view of lower bound obtained by Beyarslan, H\`{a} and Trung~\cite{behatr} given in the equation~(\ref{eqn1}) and Remark~\ref{max.}, we have the following result:

\begin{theorem}\label{main1}
For the bipartite Kneser graph $\mathcal{H}(m,k)$, we have
\[
{\rm reg}(R/{I(\mathcal{H}(m,k))}^p)\geq 2(p-1)+{2k\choose k}.
\]
\end{theorem}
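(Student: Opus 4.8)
The plan is to combine two facts already established in the excerpt: the general lower bound of Beyarslan, H\`{a} and Trung, namely $\text{reg}(R/I(G)^p)\geq 2(p-1)+\text{ind}(G)$ from equation~(\ref{eqn1}), together with the lower bound on the induced matching number recorded in Remark~\ref{max.}, namely $\text{ind}(\mathcal{H}(m,k))\geq |E_S|$. The only remaining arithmetic input is the count $|E_S|=\binom{2k}{k}$, which was computed in Remark~\ref{3dst}(2): fixing an $(m-2k)$-element subset $S\subseteq[m]$, the edges of $E_S$ are indexed by the $k$-element subsets of the remaining $[m]\setminus S$, a set of size $m-(m-2k)=2k$, giving exactly $\binom{2k}{k}$ such edges.

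First I would invoke equation~(\ref{eqn1}) applied to the specific graph $G=\mathcal{H}(m,k)$, which yields
\[
\text{reg}(R/I(\mathcal{H}(m,k))^p)\geq 2(p-1)+\text{ind}(\mathcal{H}(m,k))
\]
for every $p\geq 1$. Next I would substitute the lower bound on the induced matching number from Remark~\ref{max.}, which in turn rests on the preceding Lemma establishing that $E_S$ is a \emph{maximal} pairwise $3$-disjoint subset of $E(\mathcal{H}(m,k))$, so that its cardinality is a valid lower estimate for $\text{ind}(\mathcal{H}(m,k))$. Chaining the two inequalities gives
\[
\text{reg}(R/I(\mathcal{H}(m,k))^p)\geq 2(p-1)+\text{ind}(\mathcal{H}(m,k))\geq 2(p-1)+|E_S|=2(p-1)+\binom{2k}{k},
\]
which is exactly the asserted statement.

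Honestly, there is no substantive obstacle at this stage: all the hard work has been front-loaded into the cited inequality~(\ref{eqn1}) and into the combinatorial Lemma and Remarks that identify and count $E_S$. The proof is a short transitive chain of known bounds. The only point demanding mild care is making sure the count $|E_S|=\binom{2k}{k}$ is quoted correctly and that the maximality (not merely the existence) of $E_S$ as a pairwise $3$-disjoint set is what justifies using $|E_S|$ as a lower bound for the induced matching number; since $\text{ind}(G)$ is the \emph{maximum} size of a maximal pairwise $3$-disjoint set, any single maximal such set furnishes a valid lower bound. With those pieces in place, the theorem follows immediately by concatenation.
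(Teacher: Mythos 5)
Your proposal is correct and follows essentially the same route as the paper: the paper's proof is precisely the concatenation of the Beyarslan--H\`{a}--Trung bound (equation~(\ref{eqn1})) with Remark~\ref{max.} and the count $|E_S|=\binom{2k}{k}$ from Remark~\ref{3dst}(2). One tiny remark: maximality of $E_S$ is not actually needed for the lower bound (any pairwise $3$-disjoint set bounds ${\rm ind}(G)$ from below), though the paper phrases Remark~\ref{max.} via maximal sets just as you do.
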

Note that for $m=2k$, the bipartite Kneser graph $\mathcal{H}(m,k)$ is the ladder rung graph ${LR}_{2k\choose k}$, where ${LR}_{2k\choose k}$ is the graph union of $2k\choose k$ copies of the path graph $P_2$. It can easily seen that the induced matching number and co-chordal number of the ladder rung graph is simply the number of copies of the path graph $P_2$ in it.

Now, we shall deduce the bounds for regularity of powers of edge ideals of $\mathcal{H}(m,k)$, and show that lower bound is attained in some cases as follows.
\begin{theorem}\label{regb}
For a bipartite Kneser graph $\mathcal{H}(m,k)$, we have
\[
2(p-1)+{2k\choose k}\leq {\rm reg}(R/{I(\mathcal{H}(m,k))}^p)\leq 2(p-1)+{m\choose k}.
\] 
Furthermore, the lower bound is attained, if $m=2k$ or $2k+1$.
\end{theorem}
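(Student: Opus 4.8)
The plan is to obtain the two displayed inequalities from the general bounds already recorded in the excerpt, and then to treat the two equality cases separately; the only genuine work lies in the case $m=2k+1$.

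For the lower bound I simply invoke Theorem~\ref{main1}, which gives $2(p-1)+\binom{2k}{k}\le\mathrm{reg}(R/I(\mathcal{H}(m,k))^p)$. For the upper bound I apply inequality~\eqref{eqn}, legitimate since $\mathcal{H}(m,k)$ is bipartite, so it suffices to show $\mathrm{cochord}(\mathcal{H}(m,k))\le\binom{m}{k}$. To see this I cover $E(\mathcal{H}(m,k))$ by the stars $\mathcal{S}_A$ with $A$ ranging over the $\binom{m}{k}$ vertices of $V_1$: every edge $\{A,B\}$ lies in exactly one such star, so the stars partition the edge set, and each star is co-chordal because the complement of a star on its own vertex set is a clique together with an isolated vertex, hence chordal. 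This yields $\mathrm{reg}(R/I(\mathcal{H}(m,k))^p)\le 2(p-1)+\binom{m}{k}$.

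The equality for $m=2k$ is then immediate: there $\binom{m}{k}=\binom{2k}{k}$, so the lower and upper bounds already coincide (consistent with $\mathcal{H}(2k,k)$ being a disjoint union of $\binom{2k}{k}$ edges). The substantive case is $m=2k+1$, where $\binom{m}{k}=\binom{2k+1}{k}>\binom{2k}{k}$ and the generic upper bound is too weak. Here I reduce matters to the sharper estimate $\mathrm{cochord}(\mathcal{H}(2k+1,k))\le\binom{2k}{k}$; combined with Theorem~\ref{main1} and~\eqref{eqn}, this squeezes $\mathrm{reg}(R/I(\mathcal{H}(2k+1,k))^p)$ to exactly $2(p-1)+\binom{2k}{k}$ for every $p$.

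To realize the sharper estimate I exhibit a co-chordal cover of size $\binom{2k}{k}$, indexed by the $k$-subsets $A\subseteq[2k]$. For each such $A$ I put $B:=A\cup\{2k+1\}$ and let $H_A:=\mathcal{S}_A\cup\mathcal{S}_B$ be the double star on the two adjacent centers $A$ and $B$. I then check two things. First, each $H_A$ is co-chordal: using the standard fact that a bipartite graph is co-chordal iff it is $2K_2$-free (an induced $4$-cycle in the complement corresponds exactly to an induced $2K_2$), it is enough that $H_A$ is $2K_2$-free, and indeed any two vertex-disjoint edges of $H_A$ are joined by the central edge $\{A,B\}$. Second, the $H_A$ partition $E(\mathcal{H}(2k+1,k))$: an edge $\{A^{\ast},B^{\ast}\}$ with $2k+1\notin A^{\ast}$ sits in $\mathcal{S}_{A^{\ast}}\subseteq H_{A^{\ast}}$, whereas one with $2k+1\in A^{\ast}$ (forcing $2k+1\in B^{\ast}$) sits in $\mathcal{S}_{B^{\ast}}\subseteq H_A$ for $A=B^{\ast}\setminus\{2k+1\}$; a short count confirms the pieces are edge-disjoint and number exactly $\binom{2k}{k}$. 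The main obstacle is precisely the discovery and verification of this optimal cover: the naive star cover gives only $\binom{2k+1}{k}$, and one must exploit the single spare element $2k+1$ (available only when $m=2k+1$) to amalgamate stars in adjacent pairs without creating an induced $2K_2$, thereby forcing $\mathrm{cochord}$ down to meet the induced-matching lower bound $\binom{2k}{k}$.
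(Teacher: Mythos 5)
Your proof is correct and takes essentially the same route as the paper: the lower bound via Theorem~\ref{main1}, the star cover establishing ${\rm cochord}(\mathcal{H}(m,k))\le\binom{m}{k}$ combined with \cite[Theorem 3.6]{janase}, and, for $m=2k+1$, the identical double-star cover $\mathcal{S}_A\cup\mathcal{S}_{A\cup\{2k+1\}}$ indexed by the $k$-subsets of $[2k]$ (the paper's $\mathcal{S}_{A_iB_i}$ with a fixed spare element $t$), forcing ${\rm cochord}(\mathcal{H}(2k+1,k))\le\binom{2k}{k}$. Your explicit verification that each double star is co-chordal via the $2K_2$-free criterion is a sound filling-in of a step the paper merely asserts with ``observe that.''
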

\begin{proof}
\noindent
By definition of $\mathcal{H}(m,k)$, we have $m\ge 2k$. Consider the collection $\mathcal{A}=\{A_1,\ldots,A_{m\choose k}\}$ of all $k$-element subsets of $[m]$. Note that each star ${\mathcal{S}}_{A_j};1\le j\le {m\choose k}$, is a co-chordal subgraph of $\mathcal{H}(m,k)$ and 
\[
\bigsqcup^{m\choose k}_{j=1}E(\mathcal{S}_{A_j})=E(\mathcal{H}(m,k)).
\] 
Thus, in view of definition, we have
\begin{equation}\label{eqn3}
{\rm cochord}(\mathcal{H}(m,k))\leq {m\choose k}.
\end{equation}
Hence by Theorem~\ref{main1} and \cite[Theorem 3.6]{janase}, we get
\[
2(p-1)+{2k\choose k}\leq {\rm reg}(R/{I(\mathcal{H}(m,k))}^p)\leq 2(p-1)+{m \choose k},
\] 
as desired. 

Now, for $m=2k$, the above inequalities gives the equality
 \[
{\rm reg}(R/{I(\mathcal{H}(m,k))}^p)=2(p-1)+{2k\choose k},
\] 
as required. Also, for $m=2k+1$, consider the  maximal pairwise $3$-disjoint subset $E_S$ of $E(\mathcal{H}(m,k))$ as given in Remark~\ref{3dst}. Then $|E_S|={2k\choose k}$ and $m-2k=1$. Thus, by definition, every edge $\{A,B\}\in E_S$ is the form $B=A\sqcup \{t\}$, where $t\in [m]$ is a fixed element and $A$ is a $k$-element subset of $[m]\setminus\{t\}$. Then one can write
\[
E_S=\left\{\{A_1,B_1\},\ldots,\{A_{2k\choose k},B_{2k\choose k}\}\right\},
\]
where each $A_i$ is a $k$-element subset of $[m]\setminus\{t\}$ and $B_i=A_i\sqcup\{t\};~1\le i\le {2k\choose k}$, is a $(m-k)$-element subset of $[m]$. Define $\mathcal{S}_{A_iB_i}=\mathcal{S}_{A_i}\cup \mathcal{S}_{B_i}$, the union of star graphs $\mathcal{S}_{A_i}$ and $\mathcal{S}_{B_i};1\le i\le \binom{2k}{k}$. Then observe that each $\mathcal{S}_{A_iB_i}$ is a co-chordal subgraph of $\mathcal{H}(m,k)$. We claim that
\[
\bigsqcup^{2k\choose k}_{i=1}E(\mathcal{S}_{A_iB_i})=E(\mathcal{H}(m,k)).
\]
For, let $e=\{A,B\}\in E(\mathcal{H}(m,k))$. Then $B$ is a $(k+1)$-element set and $A$ is a $k$-element set with $A\subset B$, since $m=2k+1$. Now if $t\in B$ such that $t\notin A~ \textrm{or}~$, then $B=A\sqcup\{t\}$ and $A$ is $k$-element subset of $[m]\setminus\{t\}$, so $A=A_r$ and $B=B_r$, for some $r$. This shows that $e=\{A,B\}\in E(\mathcal{S}_{A_rB_r})$, for this $r$. If $t\notin B$, then 
$A\subset B\subset [m]\setminus \{t\}$. This shows that $A=A_s$, for some $s$, so $e=\{A,B\}\in E(\mathcal{S}_{A_sB_s})$. This proves our claim. Thus, by definition, we have
\begin{equation}\label{eqn2}
{\rm cochord}(\mathcal{H}(m,k))\leq {2k\choose k}.
\end{equation}
Therefore, in view of Theorem~\ref{main1} and \cite[Theorem 3.6]{janase}, we have
\[
{\rm reg}(R/{I(\mathcal{H}(m,k))}^p)=2(p-1)+{2k\choose k},
\] 
as desired.
\end{proof}
 
Now, we shall deduce an another upper bound for regularity of edge ideals of bipartite Kneser graphs which is finer than as obtained in Theorem~\ref{regb}. M{\"u}tze and Su~\cite{musu} settled the following conjecture  for bipartite Kneser graphs, which together with equation~(\ref{EQ}) gives an upper bound for regularity of bipartite Kneser graphs. 

\begin{theorem}\cite{musu}\label{con}
For any $k\geq 1$ and $m\geq 2k+1$, the bipartite Kneser graph $\mathcal{H}(m,k)$ has a Hamiltonian cycle.
\end{theorem}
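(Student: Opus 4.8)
The plan is to argue by induction on $m$, with the base case carrying essentially all of the difficulty. For $m=2k+1$ the graph $\mathcal{H}(2k+1,k)$ is exactly the \emph{middle-levels graph}: its two vertex classes are the $k$-subsets and the $(k+1)$-subsets of $[2k+1]$, joined by inclusion. Hamiltonicity of this graph is the content of the middle-levels conjecture, which I would invoke as a known (deep) theorem rather than reprove. I expect this base case, and not the reduction, to be the \textbf{main obstacle}; everything below is an attempt to reduce the general case to it.

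For the inductive step, assume $m\ge 2k+2$ and that $\mathcal{H}(m',k)$ is Hamiltonian for all $2k+1\le m'<m$. I would partition $V(\mathcal{H}(m,k))$ according to whether the last element $m$ lies in a given set. Writing $V_1^{\pm}$ for the $k$-subsets and $V_2^{\pm}$ for the $(m-k)$-subsets that do ($+$) or do not ($-$) contain $m$, a direct check establishes four structural facts: the subgraph induced on $V_1^{-}\cup V_2^{+}$ is isomorphic to $\mathcal{H}(m-1,k)$ (delete $m$ from each set of $V_2^{+}$, turning it into an $((m-1)-k)$-subset of $[m-1]$, and inclusion is preserved); every $A\in V_1^{+}$ is adjacent only to vertices of $V_2^{+}$; every $B\in V_2^{-}$ is adjacent only to vertices of $V_1^{-}$; and there are \emph{no} edges between $V_1^{+}$ and $V_2^{-}$. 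By the inductive hypothesis the core $V_1^{-}\cup V_2^{+}$ carries a Hamilton cycle $\mathcal{C}$, and the vertex counts are compatible, since
\[
|V_1^{-}|+|V_1^{+}|=\binom{m-1}{k}+\binom{m-1}{k-1}=\binom{m}{k}=|V_2^{+}|+|V_2^{-}|.
\]

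It then remains to absorb the two leftover families $V_1^{+}$ and $V_2^{-}$ into $\mathcal{C}$. Because $\mathcal{C}$ alternates between $V_1^{-}$ and $V_2^{+}$, and because each new vertex attaches to exactly one of the two core classes, the insertions cannot be done edge-by-edge in the naive way; instead one must reroute segments of $\mathcal{C}$ through detours that thread $V_1^{+}$ among its $V_2^{+}$-neighbours and $V_2^{-}$ among its $V_1^{-}$-neighbours, while respecting the bipartite pattern and the fact that a $V_1^{+}$-vertex and a $V_2^{-}$-vertex can never be consecutive. I would set this up as a simultaneous assignment of new vertices to disjoint rerouting sites along $\mathcal{C}$, whose feasibility is governed by a defect form of Hall's condition on the bipartite adjacency between the new families and the edges of $\mathcal{C}$; the relevant counts are favourable because the adjacency degrees grow with $m$.

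The genuine difficulty of the reduction is that $\mathcal{C}$ must present enough disjoint, correctly-typed rerouting sites to absorb all of $V_1^{+}$ and $V_2^{-}$ at once, and an arbitrary Hamilton cycle of the core need not do so. The natural remedy is to strengthen the inductive hypothesis — for instance, producing a core cycle that is guaranteed to visit the two classes in a sufficiently ``spread out'' pattern, or one equipped with a prescribed family of rerouting sites — and to propagate this stronger statement through the induction. Verifying that such a strengthened hypothesis can be maintained, together with the underlying Hall-type matching estimate, is the technical heart of the inductive step; the base case (the middle-levels theorem) remains the decisive external input.
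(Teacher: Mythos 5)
The statement you were asked to prove is not proved in the paper at all: Theorem~\ref{con} is quoted verbatim from M{\"u}tze and Su \cite{musu}, so there is no in-paper argument to compare against, and your proposal has to be judged as a standalone proof of the cited theorem. Your structural bookkeeping is correct: $V_1^{-}\cup V_2^{+}$ does induce a copy of $\mathcal{H}(m-1,k)$, the families $V_1^{+}$ and $V_2^{-}$ have size $\binom{m-1}{k-1}$ each, $V_1^{+}$ attaches only to $V_2^{+}$, $V_2^{-}$ only to $V_1^{-}$, and there are no $V_1^{+}$--$V_2^{-}$ edges. Treating the case $m=2k+1$ as the middle levels theorem is also right and unavoidable, since $\mathcal{H}(2k+1,k)$ \emph{is} the middle levels graph; this is exactly the external input that \cite{musu} also relies on.

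The genuine gap is the inductive step, and it is worse than a missing verification: the absorption mechanism you describe cannot exist in the form stated. Since $V_1^{+}$ and $V_2^{-}$ are independent sets with no edges between them, any detour inserted into a cycle consists of a single new vertex $w$ replacing an edge $uv$ of the cycle by the path $u\,w\,v$, which requires $w$ to be adjacent to both $u$ and $v$; but consecutive vertices on a Hamilton cycle of the bipartite core lie in opposite classes, while every vertex of $V_1^{+}\cup V_2^{-}$ sees only one class. So not even one leftover vertex can be inserted into any Hamilton cycle of the core by a local reroute --- the ``disjoint, correctly-typed rerouting sites'' you want to count via a Hall-type condition do not exist at all. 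Indeed, if one deletes the new vertices from a putative Hamilton cycle of $\mathcal{H}(m,k)$ (they are pairwise non-adjacent on it), what remains is a spanning family of exactly $2\binom{m-1}{k-1}$ vertex-disjoint paths in the core whose endpoints lie in prescribed classes; so the object the induction must propagate is such a path system (or an equivalently flexible structure), not a Hamilton cycle. You sense this in your final paragraph, but the strengthened hypothesis is never formulated, never checked against the middle-levels base case, and the matching estimate is never stated or proved. As it stands, the proposal establishes nothing beyond the citation the paper already makes, and the part you defer --- the reduction of $m\ge 2k+2$ to the middle levels theorem --- is precisely the mathematical content of \cite{musu}.
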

Thus, in view of the equation~(\ref{EQ}) and Theorem~\ref{con}, we have the following.

\begin{corollary}\label{con1}
For $k\geq 1$ and $m\geq 2k+1$, we have
\[
{\rm reg}(R/{I(\mathcal{H}(m,k))})\leq \left\lfloor\frac{{2{m\choose k}}+1}{3}\right\rfloor .
\]
\end{corollary}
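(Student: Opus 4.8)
The plan is to combine the two ingredients the corollary explicitly names: the Hamiltonicity result of M\"utze and Su (Theorem~\ref{con}) and the Beyarslan--H\`a--Trung bound of equation~(\ref{EQ}). First I would invoke Theorem~\ref{con} to conclude that for $k\geq 1$ and $m\geq 2k+1$, the bipartite Kneser graph $\mathcal{H}(m,k)$ contains a Hamiltonian cycle; since every Hamiltonian cycle contains a Hamiltonian path, $\mathcal{H}(m,k)$ contains a Hamiltonian path. This is precisely the hypothesis required to apply equation~(\ref{EQ}).

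Next I would count the vertices. As recorded in the discussion following Definition~\ref{def}, the graph $\mathcal{H}(m,k)$ has exactly $2\binom{m}{k}$ vertices. Setting $n = 2\binom{m}{k}$ in equation~(\ref{EQ}) gives
\[
{\rm reg}\left(I(\mathcal{H}(m,k))\right) \leq \left\lfloor \frac{n+1}{3}\right\rfloor + 1 = \left\lfloor \frac{2\binom{m}{k}+1}{3}\right\rfloor + 1.
\]

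The only remaining issue is a bookkeeping discrepancy between regularity of an ideal and regularity of the quotient ring. Equation~(\ref{EQ}) is stated for ${\rm reg}(I(G))$, whereas the corollary is stated for ${\rm reg}(R/I(\mathcal{H}(m,k)))$. Using the standard identity ${\rm reg}(R/I) = {\rm reg}(I) - 1$, I would translate the bound on ${\rm reg}(I(\mathcal{H}(m,k)))$ into the bound
\[
{\rm reg}\left(R/I(\mathcal{H}(m,k))\right) \leq \left\lfloor \frac{2\binom{m}{k}+1}{3}\right\rfloor + 1 - 1 = \left\lfloor \frac{2\binom{m}{k}+1}{3}\right\rfloor,
\]
which is exactly the claimed inequality.

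The argument is essentially immediate once the pieces are assembled, so there is no genuine analytic obstacle; the only point demanding care is the $\pm 1$ shift between ${\rm reg}(I)$ and ${\rm reg}(R/I)$, since getting this convention wrong would produce an off-by-one error in the final floor expression. I would therefore state the vertex count and the regularity-shift identity explicitly before substituting, to make the chain of equalities transparent and to confirm that the floor term survives unchanged after the shift cancels the additive constant.
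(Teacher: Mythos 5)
Your proposal is correct and follows exactly the paper's route: the paper derives Corollary~\ref{con1} immediately by combining the M\"utze--Su Hamiltonicity result (Theorem~\ref{con}) with the Beyarslan--H\`a--Trung bound of equation~(\ref{EQ}) applied to the $n = 2\binom{m}{k}$ vertices of $\mathcal{H}(m,k)$. Your explicit handling of the shift ${\rm reg}(R/I) = {\rm reg}(I) - 1$ is the only detail the paper leaves implicit, and you resolve it correctly.
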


Thus, in view of Theorem~\ref{main1} and Corollary~\ref{con1}, we get the following result.
\begin{theorem}
	For a bipartite Kneser graph $\mathcal{H}(m,k)$, we have
	\[
	\binom{2k}{k}\le {\rm reg}(R/{I(\mathcal{H}(m,k))})\leq \left\lfloor\frac{{2{m\choose k}}+1}{3}\right\rfloor .
	\]
\end{theorem}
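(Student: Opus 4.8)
The plan is to obtain the final statement simply by splicing together two results already established in the excerpt, so the ``proof'' is essentially a two-line combination. First I would recall the lower bound from Theorem~\ref{main1}, specialized to the single power $p=1$: setting $p=1$ in
\[
{\rm reg}(R/{I(\mathcal{H}(m,k))}^p)\geq 2(p-1)+\binom{2k}{k}
\]
collapses the term $2(p-1)$ to zero and yields directly
\[
{\rm reg}(R/{I(\mathcal{H}(m,k))})\geq \binom{2k}{k}.
\]
This half requires nothing new; it is the $p=1$ instance of an inequality already in hand, which in turn rests on Remark~\ref{max.} and the Beyarslan--H\`{a}--Trung lower bound \eqref{eqn1}.

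For the upper bound I would invoke Corollary~\ref{con1} verbatim, which states that for $k\geq 1$ and $m\geq 2k+1$,
\[
{\rm reg}(R/{I(\mathcal{H}(m,k))})\leq \left\lfloor\frac{2\binom{m}{k}+1}{3}\right\rfloor.
\]
Chaining the two displayed inequalities gives the claimed two-sided bound
\[
\binom{2k}{k}\le {\rm reg}(R/{I(\mathcal{H}(m,k))})\leq \left\lfloor\frac{2\binom{m}{k}+1}{3}\right\rfloor,
\]
and the proof is complete.

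The only point that warrants a word of care — and the one place I would expect to have to think rather than merely quote — is the hypothesis on the range of $m$. Corollary~\ref{con1} is stated only for $m\geq 2k+1$, because it relies on the Hamiltonian-cycle result of Theorem~\ref{con} (via M\"utze--Su) together with the Hamiltonian-path regularity bound \eqref{EQ}. The boundary case $m=2k$ is therefore not covered by the upper bound as written, since $\mathcal{H}(2k,k)$ is the ladder rung graph $LR_{\binom{2k}{k}}$ and has no Hamiltonian path; however, in that degenerate case Theorem~\ref{regb} already gives the exact value ${\rm reg}=\binom{2k}{k}$, so the lower and upper bounds coincide and the stated inequality holds trivially there as well. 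Thus the honest statement to record in the proof is that for $m\geq 2k+1$ the bound follows by combining Theorem~\ref{main1} (at $p=1$) with Corollary~\ref{con1}, while the edge case $m=2k$ is subsumed by Theorem~\ref{regb}. I would write the proof in exactly that order: quote the lower bound, quote the upper bound, observe they fit together, and flag the $m=2k$ boundary for completeness.
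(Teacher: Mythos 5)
Your two-line splice---Theorem~\ref{main1} at $p=1$ for the lower bound, Corollary~\ref{con1} for the upper bound---is exactly the paper's proof: the paper offers nothing beyond the sentence ``in view of Theorem~\ref{main1} and Corollary~\ref{con1}, we get the following result,'' so on the main route your argument is correct and identical.

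However, the one place where you chose to think rather than quote is the one place you go wrong. You claim the inequality ``holds trivially'' at $m=2k$ because Theorem~\ref{regb} gives the exact value ${\rm reg}(R/I(\mathcal{H}(2k,k)))=\binom{2k}{k}$; but that exact value \emph{violates} the claimed upper bound. Writing $n=\binom{2k}{k}$, one has $n\le\left\lfloor\frac{2n+1}{3}\right\rfloor$ if and only if $2n+1\ge 3n$, i.e. $n\le 1$, whereas $n\ge 2$ for every $k\ge 1$. Concretely, for $k=1$, $m=2$, the graph $\mathcal{H}(2,1)$ is two disjoint edges with regularity $2$, while $\left\lfloor\frac{2\binom{2}{1}+1}{3}\right\rfloor=\left\lfloor\frac{5}{3}\right\rfloor=1$. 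So the lower and upper bounds do not coincide at $m=2k$ (you may have conflated this with Theorem~\ref{regb}, where the upper bound $2(p-1)+\binom{m}{k}$ really does collapse onto the lower one when $m=2k$); the statement is simply false in that boundary case. The correct resolution is the opposite of the one you recorded: the theorem must be read with the hypothesis $m\ge 2k+1$ inherited from Corollary~\ref{con1} (equivalently, from the M\"utze--Su Hamiltonicity result, Theorem~\ref{con}), a restriction the paper leaves implicit in its statement. Your writeup should exclude $m=2k$, not absorb it.
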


Rather and Singh shown that ${\rm reg}(R/{I(\mathcal{H}(m,k))})=2$ which is equal to $\binom{2k}{k}$, if $k=1$. Also, we have ${\rm reg}(R/{I(\mathcal{H}(m,k))})=\binom{2k}{k}$, if $m=2k+1$, in view of Theorem~\ref{regb}. 

\section{Projective dimension of bipartite  Kneser graphs}

Now, we deduce bounds on projective dimension of edge ideals of bipartite Kneser graphs using domination parameters. Before proceeding further, we recall the definitions of some domination parameters of graphs. 
 
For a finite simple graph $G=(V(G),E(G))$, we write $G^{0}$ for the graph obtained from $G$ by removing the isolated vertices. 
\begin{itemize}
\item A subset $U \subseteq V(G)$ is called \textit{dominating set} if every vertex of $V(G) \setminus U$ is a neighbour of some vertex of $U$, that is, $N_G[U]=V(G)$, where $N_G[U]=\bigcup_{u\in U}N_G[u]$. The number 
\[
\gamma(G)=\min \{|C|: C \subseteq V(G) ~\text{is~ a~ dominating ~set}\}
\]
is called {\it domination number}, and the number
\[
i(G)=\min\{|C|: C \subseteq V(G)~\text{ is~ independent ~and ~a ~dominating ~set ~in} ~G\}
\]
is called an {\it independent domination number} of $G$.

\item For $C\subseteq V(G)$, we define 
\[
\gamma(C,G)=\text{min}\{|X|: X\subseteq V(G)~\text{with}~C\subseteq N_G(X)\},
\]
where $N_G(X)=\bigcup_{x\in X}N_G(x)$.
Then the number
\[
\tau(G)=\max \{\gamma(C,G^0):C \subseteq V(G^0) ~\text{is~ an independent set}\}
\]
is called {\it independence domination number} of $G$.
\end{itemize}

\begin{notation} \label{dom}\noindent \rm
Consider the graph $\mathcal{H}(m,k)$ and $S=\{j_1,j_2,\ldots,j_{m-2k}\}\subset [m]$ be a fixed subset of $[m]$ having $m-2k$ elements with $m>2k$. Choose $j \in [m] \setminus S$ and define $T=\{j_1,j_2,\ldots,j_{m-2k}\} \cup \{j\}$. Consider the collection $\mathcal{A}'=\{A_1',\ldots,A_r'\}$ of all possible $k$-subsets of $[m]$ with $A_{i}^{\prime} \cap T =\emptyset$, and $\mathcal{B}'=\{B_{1}^{\prime},\ldots,B_{r}^{\prime}\}$ be the collection of all possible  $(m-k)$-subsets of $[m]$ with $T \subseteq B_i'$ for all $i$. Then, we can write $B_i'=T \sqcup T_{i}^{\prime}$, where $T_{i}^{\prime}$ is a $(k-1)$-subset of $[m]\setminus T$ for each $i$. Clearly, we have $|\mathcal{A}'|=|\mathcal{B}'|={2k-1 \choose k-1}=\frac{1}{2}{2k \choose k}$. We set $\mathcal{W}=\mathcal{A}' \cup \mathcal{B}'$.
\end{notation}

\begin{lemma}\label{lower}
	Let $\mathcal{W}$ be as given in Notation~\ref{dom}. Then $\mathcal{W}$ is a dominating set in $\mathcal{H}(m,k)$. 
	\end{lemma}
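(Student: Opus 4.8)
The plan is to show that every vertex of $\mathcal{H}(m,k)$ either lies in $\mathcal{W}$ or is adjacent to a vertex of $\mathcal{W}$. Since the vertex set splits as $V_1 \sqcup V_2$, I would argue for the two parts separately, taking an arbitrary vertex and exhibiting a neighbour (or the vertex itself) inside $\mathcal{W} = \mathcal{A}' \cup \mathcal{B}'$. Recall that $T = S \cup \{j\}$ has $m-2k+1$ elements, the sets in $\mathcal{A}'$ are precisely the $k$-subsets disjoint from $T$, and the sets in $\mathcal{B}'$ are precisely the $(m-k)$-subsets containing $T$; every such $B_i' = T \sqcup T_i'$ with $T_i'$ a $(k-1)$-subset of $[m]\setminus T$.

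First I would handle a vertex $A \in V_1$, i.e. an arbitrary $k$-subset of $[m]$. The key dichotomy is whether $A$ meets $T$. If $A \cap T = \emptyset$, then $A \in \mathcal{A}' \subseteq \mathcal{W}$ and there is nothing to prove. If $A \cap T \neq \emptyset$, I want to produce a neighbour of $A$ in $\mathcal{B}'$, i.e. an $(m-k)$-subset $B'$ with $T \subseteq B'$ and $A \subseteq B'$. The natural candidate is to build $B' \supseteq A \cup T$; this is possible precisely when $|A \cup T| \le m-k$. Since $|A|=k$ and $|T| = m-2k+1$, we get $|A\cup T| \le |A| + |T| = m-k+1$, which is slightly too large in the worst case, so the point where $A$ meets $T$ is exactly what rescues the count: if $A \cap T \neq \emptyset$ then $|A \cup T| \le m-k$, so an appropriate $(m-k)$-subset $B'$ containing both $A$ and $T$ exists, and by construction $B' \in \mathcal{B}'$ with $A \subseteq B'$, giving the required edge.

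Next I would handle a vertex $B \in V_2$, an arbitrary $(m-k)$-subset of $[m]$. Again split on whether $T \subseteq B$. If $T \subseteq B$ then $B \in \mathcal{B}' \subseteq \mathcal{W}$. If $T \not\subseteq B$, I seek a neighbour of $B$ in $\mathcal{A}'$, that is a $k$-subset $A'$ with $A' \cap T = \emptyset$ and $A' \subseteq B$. Such an $A'$ exists iff $B \setminus T$ contains at least $k$ elements, i.e. $|B \setminus T| \ge k$. Here the failure of $T \subseteq B$ means $|B \cap T| \le |T| - 1 = m-2k$, so $|B \setminus T| = |B| - |B\cap T| \ge (m-k) - (m-2k) = k$, and a $k$-subset $A' \subseteq B\setminus T$ can be chosen; it lies in $\mathcal{A}'$ and is adjacent to $B$.

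The two cases together show $N_{\mathcal{H}(m,k)}[\mathcal{W}] = V(\mathcal{H}(m,k))$, so $\mathcal{W}$ is dominating. The only genuinely delicate point — and the one I would state carefully — is the counting inequality in each case: the fact that \emph{meeting} $T$ in the $V_1$ case lowers $|A \cup T|$ to at most $m-k$, and dually that \emph{failing to contain} $T$ in the $V_2$ case forces $|B\setminus T|\ge k$. Both hinge on $|T| = m-2k+1$ (equivalently on the choice of the extra element $j$ enlarging $S$), so I would make sure the arithmetic with $|T|$ is displayed explicitly rather than glossed over, since everything rests on it.
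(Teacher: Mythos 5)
Your proposal is correct and follows essentially the same route as the paper: a case split over the bipartition $V_1 \sqcup V_2$, with the $V_1$ case resolved by finding a member of $\mathcal{B}'$ containing $A$ (your extension of $A \cup T$, using $|A\cup T| \le m-k$ when $A\cap T \neq \emptyset$, cleanly subsumes the paper's two sub-cases $U'=\emptyset$ and $U'\neq\emptyset$) and the $V_2$ case resolved by the identical count $|B\setminus T| \ge k$ yielding a $k$-subset in $\mathcal{A}'$. The arithmetic on $|T| = m-2k+1$ that you flag as the crux is indeed exactly what the paper's proof rests on.
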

\begin{proof}
	Let $A \in V(\mathcal{H}(m,k)) \setminus \mathcal{W}$. Then we have the following two cases to be consider:\\

\noindent
{\bf Case 1.} If $A \in V_1 \setminus \mathcal{W}$, then $A \notin \mathcal{A}'$ and $A \cap T \neq \emptyset$. Write $A=U \sqcup U^\prime$, where $U=A \cap T$ and $U^\prime=A\cap ([m]\setminus T)$. If $U^\prime = \emptyset $, then from Notation \ref{dom},  it follows that $A=U\subset T \subseteq B_i'$ for all $i$, and hence $A \in N_G(\mathcal{W})$. Now, if $U^\prime \neq \emptyset $, then, since $1\leq |U^{\prime}| \leq k-1$, it follows from Notation \ref{dom} that $U^\prime \subseteq T_i'$ for some $i$. This shows that $A \subset B_{i}^{\prime}$, so $A \in N_G(\mathcal{W})$.\\

\noindent
{\bf Case 2.} If $A \in V_2 \setminus \mathcal{W}$, then $|A|=m-k$ and $A \notin \mathcal{B}^\prime$. Clearly, $T \not\subseteq A$, otherwise $A\in\mathcal{B}^\prime$. Thus, we see that, if $A \cap T=D$, then $0\leq |D|< m-2k+1$. Write $A=D \sqcup E$, where $E\subseteq [m]\setminus T$ and $k-1 <|E|\leq m-k$. Thus, in view of Notation~\ref{dom}, there exist some $1 \leq j \leq r$ such that $A_j' \subseteq E$, and hence $A \in N_G(\mathcal{W})$.
\end{proof}

\begin{proposition} \label{bound}
	For a bipartite Kneser graph $\mathcal{H}(m,k)$, we have 
\[
{\rm pd}(\mathcal{H}(m,k)) \geq 2{m \choose k}-{2k \choose k}.
\]
\end{proposition}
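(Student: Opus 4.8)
The plan is to deduce the lower bound on $\mathrm{pd}(\mathcal{H}(m,k))$ from a lower bound on the \emph{independence domination number} $\tau(\mathcal{H}(m,k))$ together with the known inequality relating projective dimension to domination data. Specifically, I would invoke the result of Dao and Schweig (as referenced in \cite{dao}) that for any graph $G$ one has $\mathrm{pd}(R/I(G)) \geq |V(G)| - \tau(G)$ — equivalently, a lower bound on projective dimension is obtained by exhibiting an independent set $C$ whose domination parameter $\gamma(C, G^0)$ is large. The total number of vertices here is $2\binom{m}{k}$, so to reach the claimed bound $2\binom{m}{k} - \binom{2k}{k}$ it suffices to produce an independent set $C$ with $\gamma(C, \mathcal{H}(m,k)) \geq \binom{2k}{k}$; then $\mathrm{pd}(\mathcal{H}(m,k)) \geq |V| - (|V| - \binom{2k}{k}) = \binom{2k}{k}$ in the relevant normalization, and more precisely I would arrange the bookkeeping so that $\tau(\mathcal{H}(m,k)) \leq \binom{2k}{k}$ yields exactly $\mathrm{pd} \geq 2\binom{m}{k} - \binom{2k}{k}$.

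First I would exhibit the witnessing configuration. The natural candidate is built from the set $\mathcal{W} = \mathcal{A}' \cup \mathcal{B}'$ constructed in Notation~\ref{dom}, which Lemma~\ref{lower} already establishes to be a dominating set of size $2\cdot\tfrac12\binom{2k}{k} = \binom{2k}{k}$. The key observation is that since $\mathcal{W}$ is a dominating set, its closed neighborhood covers every vertex, and I would use this to control $\tau$ from above: choosing $C$ to be a maximal independent set and tracking how few vertices are needed to dominate it, the size of $\mathcal{W}$ forces $\tau(\mathcal{H}(m,k)) \leq \binom{2k}{k}$. The inequality $\gamma(C, G^0) \leq |\mathcal{W}|$ holds for every independent $C$ because $\mathcal{W}$ dominates all of $V$, hence dominates $C$; taking the maximum over independent $C$ gives $\tau(\mathcal{H}(m,k)) = \max_C \gamma(C, G^0) \leq |\mathcal{W}| = \binom{2k}{k}$.

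Combining this with the Dao--Schweig bound $\mathrm{pd}(R/I(G)) \geq |V(G)| - \tau(G)$ and the vertex count $|V(\mathcal{H}(m,k))| = 2\binom{m}{k}$ computed in Section~3, I obtain
\[
\mathrm{pd}(\mathcal{H}(m,k)) \;\geq\; 2\binom{m}{k} - \tau(\mathcal{H}(m,k)) \;\geq\; 2\binom{m}{k} - \binom{2k}{k},
\]
which is precisely the asserted lower bound. The case $m = 2k$ (where there are no isolated vertices to excise) should be handled directly or excluded since the construction in Notation~\ref{dom} assumes $m > 2k$; I would note that for $m = 2k$ the graph is $\mathrm{LR}_{\binom{2k}{k}}$, a disjoint union of edges, whose projective dimension is exactly $\binom{2k}{k} = 2\binom{m}{k} - \binom{2k}{k}$, so the bound still holds with equality.

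The main obstacle I anticipate is verifying the upper bound $\tau(\mathcal{H}(m,k)) \leq \binom{2k}{k}$ rigorously rather than the lower bound on $\mathrm{pd}$ itself. The subtlety is that $\tau$ is a maximum over \emph{all} independent sets $C$ of the minimum dominating-from-outside quantity $\gamma(C, G^0)$, so I must confirm that no independent set $C$ can require more than $\binom{2k}{k}$ vertices to dominate. The clean way around this is exactly the argument above — any dominating set of the whole graph dominates every subset, so the existence of the explicit $\mathcal{W}$ of size $\binom{2k}{k}$ caps $\gamma(C, G^0)$ uniformly. The remaining care is purely in confirming that $\mathcal{H}(m,k)$ has no isolated vertices for $m > 2k$ (immediate, since it is $\binom{m-k}{k}$-regular with $\binom{m-k}{k} \geq 1$), so that $G^0 = G$ and the domination computation transfers without adjustment.
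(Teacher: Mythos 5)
There is a genuine gap: your core inequality is stated in the wrong direction. The Dao--Schweig result involving the independence domination number $\tau$ is an \emph{upper} bound, ${\rm pd}(G)\leq |V(G)|-\tau(G)$ (\cite[Theorem 4.4]{dao}); this is precisely what the paper uses later, in Theorem~\ref{pdmain}, where Lemma~\ref{upb} gives $\tau(\mathcal{H}(m,k))\geq k+1$ and hence ${\rm pd}\leq 2\binom{m}{k}-(k+1)$. There is no theorem ${\rm pd}(G)\geq |V(G)|-\tau(G)$; indeed it fails already for $G=C_5$, where ${\rm pd}(R/I(C_5))=3$ but $\tau(C_5)=1$ (any two nonadjacent vertices of $C_5$ have a common neighbour), so $|V|-\tau=4>3$. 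Since the true inequality with $\tau$ points the other way, bounding $\tau$ from above, as you do, can never produce a lower bound on projective dimension. The parameter you need is the \emph{independent domination number} $i(G)$ (the minimum size of an independent dominating set): \cite[Proposition 4.7]{dao} gives ${\rm pd}(G)\geq |V(G)|-i(G)$, and since the set $\mathcal{W}$ of Notation~\ref{dom} is independent and, by Lemma~\ref{lower}, dominating, one gets $i(\mathcal{H}(m,k))\leq |\mathcal{W}|=\binom{2k}{k}$, whence ${\rm pd}(\mathcal{H}(m,k))\geq 2\binom{m}{k}-\binom{2k}{k}$. This is exactly the paper's argument, and your proposal is a one-line fix away from it: replace $\tau$ by $i$ and cite the correct proposition.

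A secondary flaw, worth noting because it would persist even after the main fix: your claim that $\gamma(C,G^0)\leq|\mathcal{W}|$ for every independent $C$ ``because $\mathcal{W}$ dominates all of $V$, hence dominates $C$'' confuses closed-neighbourhood domination with the open-neighbourhood condition in the definition of $\gamma(C,G)$, which requires $C\subseteq N_G(X)=\bigcup_{x\in X}N_G(x)$. Since $\mathcal{W}$ is independent, no vertex of $\mathcal{W}$ lies in $N_G(\mathcal{W})$, so $\mathcal{W}$ does not open-dominate the independent set $C=\mathcal{W}$ itself; the conclusion $\tau(G)\leq\binom{2k}{k}$ happens to be true (via the general chain $\tau(G)\leq |V(G)|-{\rm pd}(G)\leq i(G)$), but not by the argument you give. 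Your treatment of the boundary case $m=2k$ (where $\mathcal{H}(2k,k)$ is a disjoint union of $\binom{2k}{k}$ edges and the bound holds with equality) is correct and matches the paper's disposal of that case.
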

\begin{proof}
For $m=2k$, we are done. Thus, we may assume that $m >2k$. Using \cite[Proposition 4.7]{dao}, we have ${\rm pd}(\mathcal{H}(m,k)) \geq |V(\mathcal{H}(m,k))|-i(\mathcal{H}(m,k))$. Further, using Lemma \ref{lower} and the fact that $\mathcal{W}$ is an independent, we have   ${2k \choose k}=|\mathcal{W}|\geq i(\mathcal{H}(m,k))$. Hence ${\rm pd}(\mathcal{H}(m,k)) \geq |V(\mathcal{H}(m,k))|-i(\mathcal{H}(m,k)) \geq 2{m \choose k}-{2k \choose k}$.
\end{proof}

\begin{notation} Consider the graph $\mathcal{H}(m,k)$ as given in Definition~\ref{def} and $Q\subset [m]$ with $|Q|=k-1$. Define a collection $\mathcal{D}=\{B \in V_2: Q \subset B\}$. Also, choose a subset $S=\{i_1,\ldots,i_{k+1}\}\subset [m]$ such that $S \cap Q=\emptyset$. Consider the collection $\mathcal{E}=\{Q \sqcup \{i_t\}: 1 \leq t \leq k+1\}.$ 
	\end{notation}

\begin{lemma}\label{upb} With same notations as above, we have $\gamma(\mathcal{D},\mathcal{H}(m,k))=k+1$.
\end{lemma}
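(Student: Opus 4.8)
The plan is to prove the two inequalities $\gamma(\mathcal{D},\mathcal{H}(m,k)) \le k+1$ and $\gamma(\mathcal{D},\mathcal{H}(m,k)) \ge k+1$ separately, using throughout the adjacency rule of Definition~\ref{def}: a vertex $A \in V_1$ is adjacent to $B \in V_2$ precisely when $A \subseteq B$, together with the bipartite structure $V(\mathcal{H}(m,k)) = V_1 \sqcup V_2$.

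For the upper bound I would show that the collection $\mathcal{E}$ dominates $\mathcal{D}$, which gives $\gamma(\mathcal{D},\mathcal{H}(m,k)) \le |\mathcal{E}| = k+1$. Fix $B \in \mathcal{D}$; then $Q \subset B$ and $|[m]\setminus B| = k$. Since $S = \{i_1,\dots,i_{k+1}\}$ has $k+1$ elements, the pigeonhole principle forces $i_t \in B$ for at least one $t$, because $S$ cannot be contained in the $k$-element set $[m]\setminus B$. Consequently $Q \sqcup \{i_t\} \subseteq B$, so $B$ is adjacent to $Q \sqcup \{i_t\} \in \mathcal{E}$; hence $\mathcal{D} \subseteq N_G(\mathcal{E})$.

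For the lower bound I would argue that no set $X$ with $|X| \le k$ satisfies $\mathcal{D} \subseteq N_G(X)$. First, since $\mathcal{D} \subseteq V_2$ and $\mathcal{H}(m,k)$ is bipartite, any vertex of $X$ lying in $V_2$ has all its neighbours in $V_1$ and therefore dominates no element of $\mathcal{D}$; so I may assume $X = \{A_1,\dots,A_\ell\} \subseteq V_1$ with $\ell \le k$. It then suffices to produce a single $B \in \mathcal{D}$ with $A_j \not\subseteq B$ for every $j$. Passing to complements via the bijection $B \leftrightarrow [m]\setminus B$ between $\mathcal{D}$ and the $k$-subsets of $[m]\setminus Q$, the condition $A_j \not\subseteq B$ becomes $A_j \cap ([m]\setminus B) \neq \emptyset$. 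Since $|A_j| = k > k-1 = |Q|$, each $A_j$ meets $[m]\setminus Q$; choosing, for each $j$, an element of $A_j \cap ([m]\setminus Q)$ yields a set of at most $\ell \le k$ elements of $[m]\setminus Q$ meeting every $A_j$, and because $|[m]\setminus Q| = m-k+1 > k$ I can pad it to a $k$-subset $\overline{B} \subseteq [m]\setminus Q$ still meeting every $A_j$. Then $B = [m]\setminus \overline{B}$ lies in $\mathcal{D}$ and avoids containment of each $A_j$, so $X$ fails to dominate $\mathcal{D}$. This proves $\gamma(\mathcal{D},\mathcal{H}(m,k)) \ge k+1$, and combining with the upper bound yields the equality.

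The main obstacle is the lower bound: the reduction to $X \subseteq V_1$ via bipartiteness is routine, but the crux is the transversal-plus-padding construction of the avoiding set $B$, where one must simultaneously guarantee that $\overline{B}$ has size exactly $k$, stays inside $[m]\setminus Q$ (so that $Q \subseteq B$ and $B \in \mathcal{D}$), and meets every $A_j$. The hypothesis $m > 2k$ from the Notation is exactly what makes $|[m]\setminus Q| = m-k+1$ large enough to permit the padding, so I would keep careful track of that inequality.
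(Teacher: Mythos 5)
Your proof is correct and takes essentially the same approach as the paper: the upper bound is the identical pigeonhole argument showing $\mathcal{E}$ dominates $\mathcal{D}$, and your transversal-plus-padding construction of $\overline{B}$ is precisely the complement-side view of the paper's set $Y_2 = Q \sqcup Y_1$, which is built to avoid one chosen element $a_i \in C_i \setminus Q$ from each hypothetical dominator. Your only addition is making explicit the reduction to dominating sets inside $V_1$ (via bipartiteness), a point the paper's proof leaves implicit.
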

\begin{proof}
Let $B \in \mathcal{D}$. Then, by definition of $\mathcal{D}$, we can write $B= Q \sqcup E$, where $|E|=m-2k+1$. We claim that $i_t \in E$ for some $t;1 \leq t \leq k+1$. On the contrary, suppose that $i_j \notin E$ for all $j$, that is, $S \cap E =\emptyset$. Then $|B \cup S|=m+1>m$, a contradiction. Thus, for any $B \in \mathcal{D}$, there exist a $k$- subset $P=Q \sqcup \{i_t\} \in \mathcal{E}$ such that $P \subseteq B$, that is, $\mathcal{D} \subseteq N_G(\mathcal{E})$, and hence  $\gamma(\mathcal{D},\mathcal{H}(m,k)) \leq k+1$.

Next we proceed to show that $\gamma(\mathcal{D},\mathcal{H}(m,k)) = k+1$. On the contrary, suppose that $\gamma(\mathcal{D},\mathcal{H}(m,k))=r, r< k+1,$ i.e., there exists $\mathcal{C}=\{C_1,\ldots,C_r\}$ of $V(G)$ such that $\mathcal{D} \subseteq N_G( \mathcal{C})$. The fact that $Q \neq C_i$ implies that there exists $a_i \in C_i$ such that $a_i \notin Q$ for all $i$. If elements $a_1,\ldots,a_r$ are not distinct, then by deleting repetitions we consider a set $X=\{a_{t_1},\ldots,a_{t_l}\}$, where $1 \leq t_1<\cdots<t_l \leq r$. Let $Y=[m] \setminus (Q \sqcup X)$. Then since $l \leq k$, we have $|Y|=m-(k-1+l) \geq m-2k+1$. Consider a subset $Y_1=\{b_1,\ldots,b_{m-2k+1}\}$ of the set $Y$ and set $Y_2=Q \sqcup Y_1$. Clearly, $Y_2 \in \mathcal{D}$. Now from the construction of the set $Y_2$, it follows that $C_i \not\subseteq Y_2$ for all $i$, a contradiction to the fact that $\mathcal{D} \subseteq N_G( \mathcal{C})$.
\end{proof}

\begin{theorem}{\label{pdmain}}
For a bipartite Kneser graph $\mathcal{H}(m,k)$, we have 
\[
 2{m \choose k}-{2k \choose k} \leq {\rm pd}(\mathcal{H}(m,k)) \leq  2{m \choose k}-\max \left\{ k+1,\frac{{m \choose k}}{{m-k \choose k}}\right\}.
\]
\end{theorem}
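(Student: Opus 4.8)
The plan is to establish the two inequalities separately. The lower bound $2\binom{m}{k}-\binom{2k}{k}\le \mathrm{pd}(\mathcal{H}(m,k))$ is already proved in Proposition~\ref{bound}, so the substance of the theorem lies in the upper bound $\mathrm{pd}(\mathcal{H}(m,k))\le 2\binom{m}{k}-\max\{k+1,\,\binom{m}{k}/\binom{m-k}{k}\}$. The natural tool is the result of Dao and Schweig relating projective dimension of edge ideals to the independence domination number $\tau$; specifically, one uses a bound of the form $\mathrm{pd}(\mathcal{H}(m,k))\le |V(\mathcal{H}(m,k))|-\tau(\mathcal{H}(m,k))$, where $|V(\mathcal{H}(m,k))|=2\binom{m}{k}$. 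Thus the upper bound reduces to showing
\[
\tau(\mathcal{H}(m,k))\ge \max\left\{k+1,\,\frac{\binom{m}{k}}{\binom{m-k}{k}}\right\}.
\]
Since $\tau(G)=\max\{\gamma(C,G^0):C\subseteq V(G^0)\text{ independent}\}$, it suffices to exhibit two independent sets $C$ witnessing each of the two quantities in the maximum.

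First I would handle the term $k+1$. The set $\mathcal{D}=\{B\in V_2:Q\subset B\}$ from Lemma~\ref{upb} is independent, since it lies entirely inside the independent part $V_2$ of the bipartition. Lemma~\ref{upb} already computes $\gamma(\mathcal{D},\mathcal{H}(m,k))=k+1$, so taking $C=\mathcal{D}$ immediately gives $\tau(\mathcal{H}(m,k))\ge k+1$. This is the easy half, as the required combinatorial work is packaged in the preceding lemma.

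Next I would handle the term $\binom{m}{k}/\binom{m-k}{k}$. Here the natural candidate is to take $C$ to be the whole independent set $V_1$ (all $k$-subsets), which is independent since $V_1$ is one side of the bipartition. I would then estimate $\gamma(V_1,\mathcal{H}(m,k))$, the minimum number of vertices whose neighbourhoods cover all of $V_1$. Any vertex used in a covering set $X$ that contributes to covering $V_1$ must lie in $V_2$ (a neighbour of a $k$-set is an $(m-k)$-set), and a single $(m-k)$-subset $B$ has exactly $\binom{m-k}{k}$ neighbours in $V_1$, namely the $k$-subsets of $B$. Therefore each element of $X$ covers at most $\binom{m-k}{k}$ of the $\binom{m}{k}$ vertices of $V_1$, which forces $|X|\ge \binom{m}{k}/\binom{m-k}{k}$, and hence $\gamma(V_1,\mathcal{H}(m,k))\ge \binom{m}{k}/\binom{m-k}{k}$; taking ceilings is harmless since $\tau$ is an integer and dominates this rational lower bound. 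Combining the two witnesses gives $\tau(\mathcal{H}(m,k))\ge\max\{k+1,\binom{m}{k}/\binom{m-k}{k}\}$, which together with the Dao--Schweig inequality yields the claimed upper bound.

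The main obstacle I anticipate is the precise form of the projective dimension bound being invoked: one must confirm that the Dao--Schweig inequality applies in the form $\mathrm{pd}\le |V|-\tau$ (as opposed to a bound involving $\gamma$ or $i$ with a different sign), and that isolated-vertex issues do not intervene, which here they do not since $\mathcal{H}(m,k)$ has no isolated vertices when $m>2k$ (and the $m=2k$ case is disposed of directly, as in Proposition~\ref{bound}). A secondary technical point is verifying that the simple counting lower bound for $\gamma(V_1,\mathcal{H}(m,k))$ is tight enough for the desired inequality; since we only need a lower bound on $\tau$, the averaging argument above suffices and no optimal covering need be constructed.
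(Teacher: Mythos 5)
Your proposal is correct, and it handles half of the argument exactly as the paper does: the lower bound is quoted from Proposition~\ref{bound}, and the term $k+1$ is obtained from the Dao--Schweig inequality ${\rm pd}(\mathcal{H}(m,k))\le |V|-\tau(\mathcal{H}(m,k))$ together with Lemma~\ref{upb} applied to the independent set $\mathcal{D}\subseteq V_2$ (and the graph is $\binom{m-k}{k}$-regular, so the isolated-vertex caveat is indeed vacuous). Where you genuinely diverge is the second term of the maximum: the paper does \emph{not} get $\binom{m}{k}/\binom{m-k}{k}$ from $\tau$, but instead cites a separate result of Dao--Huneke--Schweig (their Corollary~5.4), the maximum-degree bound ${\rm pd}(G)\le n\left(1-\frac{1}{2d}\right)$, which for the $\binom{m-k}{k}$-regular graph on $2\binom{m}{k}$ vertices yields $2\binom{m}{k}-\binom{m}{k}/\binom{m-k}{k}$ directly; the stated bound is then the better of the two citations. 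You instead apply the same $\tau$ inequality a second time with the witness $C=V_1$: since every neighbour of a $k$-set lies in $V_2$ and each $B\in V_2$ has exactly $\binom{m-k}{k}$ neighbours in $V_1$, any $X$ with $V_1\subseteq N_G(X)$ satisfies $|X|\ge \binom{m}{k}/\binom{m-k}{k}$, so $\tau(\mathcal{H}(m,k))\ge \max\left\{k+1,\binom{m}{k}/\binom{m-k}{k}\right\}$ and one application of Dao--Schweig finishes. This counting argument is valid, and your route buys two things: it is more self-contained (one external theorem instead of two, with both terms traced to a single domination parameter), and since $\tau$ is an integer it actually yields the marginally sharper bound $2\binom{m}{k}-\max\left\{k+1,\left\lceil \binom{m}{k}/\binom{m-k}{k}\right\rceil\right\}$, whereas the paper's degree-bound route gives the possibly non-integral expression as stated. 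What the paper's route buys in exchange is immediacy: Corollary~5.4 of Dao--Huneke--Schweig requires no further verification, while your argument must supply the averaging step (which you do correctly).
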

\begin{proof}
In view of Proposition~\ref{bound}, it is enough to prove that 
\[
{\rm pd}(\mathcal{H}(m,k)) \leq  2{m \choose k}-\max \left\{ k+1,\frac{{m \choose k}}{{m-k \choose k}}\right\}.
\]
Using \cite[Corollary 5.4]{huneke}, we obtain ${\rm pd}(\mathcal{H}(m,k)) \leq  2{m \choose k}\left\{1- \frac{1}{2{m-k \choose k}}\right\}$. Also, using \cite[Theorem 4.4]{dao}, we have ${\rm pd}(\mathcal{H}(m,k)) \leq 2{m \choose k}-\tau(G)$. Now, Lemma \ref{upb} gives us the desired result.
\end{proof}

\begin{remark}\rm It was shown by Rather and Singh~\cite{shpa} that $\text{pd}
(\mathcal{H}(m,1))=2m-2$ which is equal to $2\binom{m}{k}-\binom{2k}{k}$, if $k=1$. Also, using Singular 2.0~\cite{sin}, we have seen that $\text{pd}
(\mathcal{H}(5,2))=15$ which is not equal to $2\binom{m}{k}-\binom{2k}{k}$, if $m=5$ and $k=2$.
\end{remark}

\end{document}